\documentclass[a4paper,11pt]{article}
\usepackage[utf8]{inputenc} 
\usepackage[english]{babel}
\usepackage[T1]{fontenc}   
\usepackage[normalem]{ulem}
\usepackage{mathpazo}
\usepackage{graphicx}
\usepackage{comment}
\usepackage{amsthm,amsmath,amsfonts,stmaryrd,mathrsfs,amssymb,mathtools}
\usepackage{mathtools}
\usepackage{bbm}
\usepackage{enumitem}
\usepackage{chemarrow}
\usepackage{tikz}
\usepackage[top=2.5cm, bottom=2.5cm, left=2cm, right=2cm]{geometry}
\usepackage{subfig}
\usepackage{multirow}
\usepackage{physics}
\usepackage{array}
\usepackage{bigints}
\usepackage{etoolbox}
\usepackage{mdframed}
\usepackage{color}
\usepackage{booktabs}
\usepackage{textgreek}
\usepackage[unicode,colorlinks=true,linkcolor=blue,citecolor=red,pdfencoding=auto,psdextra]{hyperref}
\sloppy 
 \usepackage{dsfont}

\usepackage{enumitem}
\usepackage{amsmath}
\usepackage{upgreek}
\usepackage{commath}
\usepackage{amssymb}
\usepackage{xcolor}
\usepackage{amsthm}
\usepackage{csquotes}
\usepackage{titlesec}
\linespread{1.1}

\DeclareRobustCommand{\P}{\mathbb{P}}
\DeclareRobustCommand{\E}{\mathbb{E}}
\DeclareRobustCommand{\asy}{\underset {n\rightarrow \infty} \sim}
\DeclareRobustCommand{\to}{\xrightarrow[n\rightarrow \infty]{}}

\DeclareRobustCommand{\Z}{\mathbb{Z}}

\DeclareRobustCommand{\R}{\mathbb{R}}
\DeclareRobustCommand{\Cauchy}{\mathcal{C}_1}

\DeclareRobustCommand{\dist}{\xrightarrow[n \rightarrow \infty]{(d)}}
\DeclareRobustCommand{\prob}{\xrightarrow[n \rightarrow \infty]{(\P)}}

\DeclareRobustCommand{\T}{\mathcal{T}}

\DeclareRobustCommand{\dtv}{\mathrm{d}_{\mathrm{TV}}}

\titleformat{\subsection}[runin]
{\normalfont\bfseries}{\thesubsection}{1em}{}
\titleformat{\subsubsection}[runin]
{\bfseries}{\thesubsubsection}{1em}{}

\usepackage[normalem]{ulem}

\newmdtheoremenv{theorem}{Theorem}[section]
\newmdtheoremenv{proposition}[theorem]{Proposition}
\newtheorem{lemma}[theorem]{Lemma}

\newtheorem{corollary}[theorem]{Corollary}

\newtheorem{example}[theorem]{Example}

\DeclareSymbolFont{extraup}{U}{zavm}{m}{n}
\DeclareMathSymbol{\vardspade}{\mathalpha}{extraup}{81}
\DeclareMathSymbol{\varheart}{\mathalpha}{extraup}{86}
\DeclareMathSymbol{\vardiamond}{\mathalpha}{extraup}{87}
\DeclareMathSymbol{\varclub}{\mathalpha}{extraup}{84}

\makeatletter
\renewcommand*{\@fnsymbol}[1]{\ensuremath{\ifcase#1\or  \vardspade \or \varheart \or \vardiamond\or \varclub \or \bigstar \or
   \mathsection\or \mathparagraph\or \|\or **\or \dagger\dagger   \or \ddagger\ddagger \else\@ctrerr\fi}}
\makeatother

\author{
Igor Kortchemski 
\thanks{CNRS \& DMA, École normale supérieure, PSL University, 75005 Paris, France, \textsf{igor.kortchemski@math.cnrs.fr}
} 
\qquad
Leonard Vetter 
\thanks{ETH Zürich, \textsf{leonardvetter@web.de}
}   
}

\begin{document}
\title{Condensation in subcritical Cauchy Bienaymé trees}
\date{}
\maketitle

\begin{abstract}
The goal of this note is to study the geometry of large size-conditioned Bienaymé  trees whose offspring distribution is subcritical, belongs to the domain of attraction of a stable law of index $\alpha=1$ and satisfies a local regularity assumption. We show that a condensation phenomenon occurs: one unique vertex of macroscopic degree emerges, and its height converges in distribution to a geometric random variable. Furthermore, the height of such trees grows logarithmically in their size. Interestingly, the behavior of subcritical Bienaymée trees with $\alpha=1$ is quite similar to the case $\alpha \in( 1,2]$, in  contrast with the critical case. This completes the study of the height of heavy-tailed size-conditioned Bienaymé trees.

Our approach is to check that a random-walk one-big-jump principle due to Armend\'ariz \& Loulakis holds, by using local estimates due to Berger, combined with the previous approach to study subcritical Bienaymé trees with $\alpha>1$.
\end{abstract}

\begin{figure}[!ht]
\label{fig:plot}
\centering
\includegraphics[height=10cm]{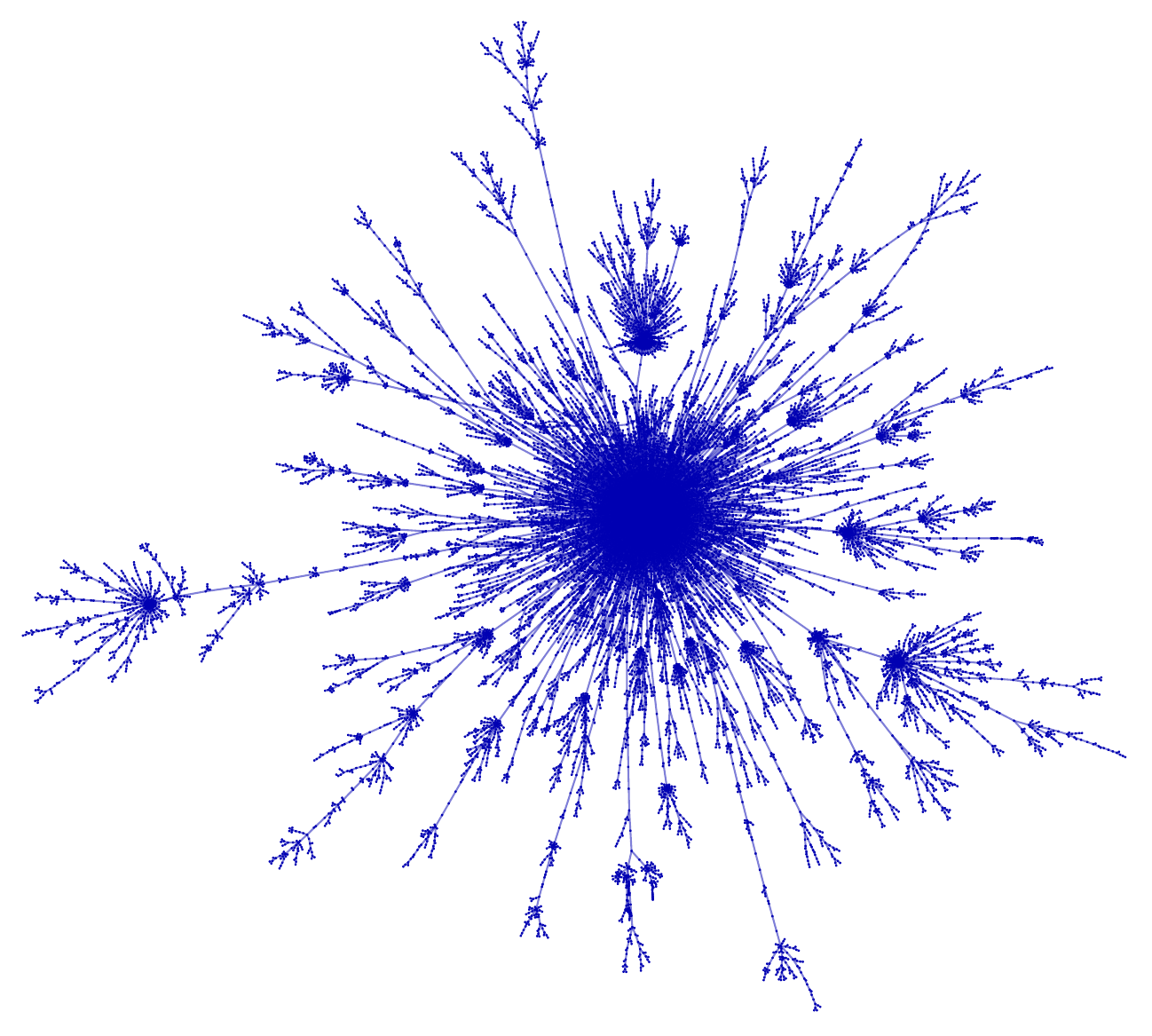}%
\caption{A simulation of a subcritical Cauchy Bienaymé tree with $20000$ vertices.
}
\end{figure}

\section{Introduction}
    
        The main purpose of this work is to  complete the study of the height of heavy-tailed size-conditioned Bienaymé trees (sometimes also called Bienaymé--Galton--Watson trees, or Galton--Watson trees in the literature) by considering an offspring distribution $\mu = (\mu_j:j\geq 0)$ such that 
         \begin{equation}
         \tag{$H^{\textrm{loc}}_\mu$}
         \label{eq:H_loc_mu}
            m \coloneqq \sum_{k\geq 0}k\mu_k < 1 \qquad  \textrm{and} \qquad  \mu_n \asy\frac{L(n)}{n^2},
        \end{equation}        
      where $L : \R_{+} \rightarrow \R^{*}_{+}$ is a slowly varying function, meaning that $ \lim_{x\rightarrow \infty}L(ax)/L(x) = 1$ for all $a> 0$.         The first condition amounts to saying that $\mu$ is subcritical, while the second condition implies that the offspring distribution is in the domain of attraction of a stable law of index $\alpha=1$ (i.e.~a Cauchy distribution).

        We denote by $\T_{n}$ a $\mu$-Bienaymé tree conditioned to have $n$ vertices  (we always implicitly restrict to those values of $n$ for which this event has non-zero probability). We let $\Delta(\mathcal{T}_n)$ be its maximum outdegree,  $\Delta^{2}(\mathcal{T}_n)$ be its second maximal outdegree, $\mathsf{H}_\Delta(\mathcal{T}_n)$ be the height of the vertex with maximal outdegree (first in lexicographical order if many, see below for details) and finally we let $\mathsf{H}(\T_{n})$ be the height of $\T_{n}$, i.e. the maximal graph distance of the root to any of its vertices.
        
              \begin{theorem}\label{thm:main}
              The following assertions hold.
              \begin{enumerate}
              \item[(i)] [condensation] We have
              $\displaystyle \frac{\Delta(\mathcal{T}_n)}{n(1-m)}\prob 1$ and $\displaystyle\frac{\Delta^{2}(\mathcal{T}_n)}{n}\prob 0.$
              \item[(ii)] [height of condensation vertex] For every $j \geq 0$ we have
              $$\P(H_{\Delta}(\mathcal{T}_n)=j) \to (1-m)m^{j}.$$
              \item[(iii)] [height of the tree] 
              \begin{enumerate}
              \item[(a)]               The following convergence holds in probability
                                 $$ \frac{\mathsf{H}(\mathcal{T}_n)}{\log(n)} \prob \frac{1}{\log(1/m)}.$$
              \item[(b)] The sequence $ \left({\mathsf{H}(\mathcal{T}_n)}- \frac{\log(n)}{\log(1/m)} \right)_{n \geq 1}$ is tight if and only if $\sum_{n \geq 1} ( n \log n) \mu_{n} <\infty$.
              \end{enumerate}
              \end{enumerate}
              \end{theorem}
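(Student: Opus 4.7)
My plan is to encode $\T_n$ by its Łukasiewicz walk and reduce everything to a statement about a conditioned iid random walk. Let $(\xi_i)_{i\geq 1}$ be iid with law $\mu$ and $S_k = \xi_1+\cdots+\xi_k$. By the cycle lemma, $\T_n$ is obtained from $(\xi_1,\dots,\xi_n)$ conditioned on $\{S_n = n-1\}$ via a uniformly chosen cyclic shift. Since $\E[\xi_1]=m<1$, this conditioning forces an atypical surplus of order $n(1-m)$ over the mean, and in the heavy-tailed subcritical regime the surplus is produced by a single huge increment. The heart of the proof is a quantitative \emph{one-big-jump} principle à la Armendáriz--Loulakis: writing $I_n=\mathrm{argmax}_{1\leq i\leq n}\xi_i$,
\[
\dtv\!\Bigl(\mathcal L\bigl((\xi_1,\dots,\xi_n)\,\bigm|\,S_n=n-1\bigr),\ \mathcal L^{\mathrm{mix}}_n\Bigr)\ \longrightarrow\ 0,
\]
where under $\mathcal L^{\mathrm{mix}}_n$ the index $I_n$ is uniform on $\{1,\dots,n\}$, $\xi_{I_n}$ is concentrated near $n(1-m)$, and the remaining $n-1$ coordinates are iid with law $\mu$. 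Establishing this in the $\alpha=1$ case is where Berger's sharp local bounds on $\P(S_n=j)$ enter, since the classical Stone local limit theorem is unavailable here ($S_n-nm$ converges to a Cauchy law only after an asymmetric logarithmic recentering). Granted the principle, Part~(i) is immediate: $\Delta(\T_n)=\max_i\xi_i\sim\xi_{I_n}\sim n(1-m)$, while $\Delta^2(\T_n)$ is dominated by the maximum of $n-1$ unconditioned iid $\mu$-samples and is thus $o(n)$ in probability.

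For Part~(ii), the vertex attaining $\Delta(\T_n)$ is the one visited at time $I_n-1$ in depth-first order, so $\mathsf H_\Delta(\T_n)$ is a functional of the prefix $(\xi_1,\dots,\xi_{I_n-1})$. Under $\mathcal L^{\mathrm{mix}}_n$, $I_n/n$ is asymptotically uniform on $[0,1]$ and, conditionally on $I_n$, this prefix is an unconditioned iid $\mu$-sample; it then suffices to use the classical fact that in an unconditioned subcritical $\mu$-Bienaymé forest the height of the $k$-th vertex converges in law, as $k\rightarrow\infty$, to $\mathrm{Geom}(1-m)$. The latter is a standard consequence of Kemperman's formula applied to the ancestral line, whose length equals the number of strict descending ladder epochs of the time-reversed walk, each of which persists with probability tending to $m$. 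The total-variation bound above then transfers the convergence from $\mathcal L^{\mathrm{mix}}_n$ to the true conditional law.

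For Part~(iii), the one-big-jump decomposition gives a spinal description of $\T_n$: the condensation vertex $v^\star$ has $\Delta(\T_n)\sim n(1-m)$ children rooting asymptotically independent unconditioned $\mu$-Bienaymé trees $\T^{(i)}$, while the ``pre-spine'' from the root to $v^\star$ contributes only $O_{\P}(1)$ to the height by (ii). Hence
\[
\mathsf H(\T_n)\ =\ \mathsf H_\Delta(\T_n)\,+\,1\,+\,\max_{1\leq i\leq \Delta(\T_n)}\mathsf H(\T^{(i)})\,+\,o_{\P}(1).
\]
The classical tail $\P(\mathsf H(\T)\geq k)=m^k\ell(k)$ with $\ell$ slowly varying, together with $\ell(k)\to c\in(0,\infty)$ iff $\sum_n n(\log n)\mu_n<\infty$ (Heathcote--Seneta--Vere-Jones), then controls the max over $N=\Delta(\T_n)\sim n(1-m)$ iid copies: it lies at $\log N/\log(1/m)+O_{\P}(1)$, giving~(a); it is tight around this value precisely under the $n\log n$ moment assumption, giving the ``if'' direction of~(b), while the ``only if'' part follows by extracting subsequences along which $\ell$ fails to converge to a positive constant, forcing the recentered maximum to fluctuate.

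The principal obstacle throughout is the $\alpha=1$ one-big-jump principle itself: one must control $\P(S_{n-1}=n-1-j)$ uniformly in $j$ over the relevant window around $n(1-m)$, which in the Cauchy regime requires Berger-type local estimates rather than an off-the-shelf LLT. Once this is in place, the tree-level consequences are extracted by the same strategy as in the $\alpha\in(1,2]$ case.
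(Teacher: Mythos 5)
Your overall strategy mirrors the paper's: encode by the \L ukasiewicz walk, reduce to the bridge-conditioned random walk, establish the Armend\'ariz--Loulakis one-big-jump principle via Berger's local estimates (this being precisely where $\alpha=1$ requires new input), and then read Parts~(i),~(ii) off the resulting decomposition. That part of your write-up is in line with the paper.

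There is, however, a genuine gap in your treatment of Part~(iii). You assert that $Q_n := \P(\mathsf H(\T)\geq n)=m^n\ell(n)$ with $\ell$ \emph{slowly varying}. This is false in the Cauchy case, and identifying this failure is exactly the new difficulty the paper has to overcome compared to $\alpha>1$. What one actually gets (Lemma \ref{lem:bounds}) is a recursion $Q_{n+1}=Q_n\bigl(m-\ell(Q_n)\bigr)$ with $\ell$ slowly varying \emph{at $0$} and $\ell(0^+)=0$; this does not make $u_n:=Q_n/m^n$ slowly varying in $n$. Indeed $u_n$ is strictly decreasing, and Proposition \ref{prop:ex} shows that for $L(n)\sim c/\log(n)^{1+\beta}$ one has $\log u_n\sim -\tfrac{c}{m\log(1/m)}\log n$ when $\beta=1$ (so $u_n$ is of polynomial, not slowly varying, order) and $\log u_n\sim -c'\,n^{1-\beta}$ when $\beta\in(0,1)$ (even faster decay). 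Consequently, your claim that the maximum of $N\sim n(1-m)$ iid heights ``lies at $\log N/\log(1/m)+O_\P(1)$'' is also wrong when $\sum_n n(\log n)\mu_n=\infty$; that would assert exactly the tightness that (iii)(b) says fails. To salvage (iii)(a) one needs the weaker two-sided bound $(m-\eta)^n\leq Q_n\leq m^n$ for any $\eta\in(0,m)$ and $n$ large (Lemma \ref{lem:bounds}(ii)), from which $nQ_{(1+\varepsilon)\log n/\log(1/m)}\to 0$ and $nQ_{(1-\varepsilon)\log n/\log(1/m)}\to\infty$, and then a zero-one-type statement for the maximum height of the grafted forests (Proposition \ref{prop:height}). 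Your description of the ``only if'' direction of (iii)(b)---extracting subsequences along which $\ell$ ``fails to converge,'' forcing the recentered maximum to ``fluctuate''---is also the wrong picture: $u_n$ is monotone decreasing to $0$, so there is no fluctuation; what happens is a systematic downward drift, and the correct argument (as in the paper) is to exhibit an explicit diverging sequence $r_n$ with $\P\bigl(\mathsf H(\T_n)\leq \log n/\log(1/m)-r_n\bigr)\to1$, built from the rate at which $u_n\downarrow 0$.
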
 
              This shows in particular that a condensation phenomenon occurs, in the sense that the maximal degree of $\T_{n}$ is comparable to the total size of the tree while the second largest degree is negligible compared to the total size of the tree. See Corollary \ref{cor:fluctuations} for the fluctuations of $\Delta(\mathcal{T}_n)$. We discuss some aspects of Theorem \ref{thm:main} after describing the context.

            \subsection{Context.}
The condensation phenomenon was  discovered by Jonsson and Stefánsson \cite{Jonsson_Stefansson_2011} for subcritical offspring distributions satisfying  $\mu_n \sim c/n^{1+\alpha}$ as $n\rightarrow \infty$ with $\alpha>1$ and $c>0$ (see also \cite[Sec.~19.6]{Jan12}). This was extended  in \cite{Kortchemski_2013} to subcritical offspring distributions satisfying $\mu_{n} \sim L(n)/n^{1+\alpha}$ with $L$ slowly varying and $\alpha>1$, where the same conclusions as those of Theorem \ref{thm:main} were established.

 In the critical case, when $\alpha>1$ there is no condensation phenomenon occurring. Indeed, for critical offspring distributions, when $\mu$ has finite variance (which requires $\alpha \geq 2$), the scaling limit of $ \mathcal{T}_{n}$ is Aldous' Brownian Continuum Random Tree \cite{Ald93}, and when $\mu$ has infinite variance (which requires $\alpha \in (1,2]$), the scaling limit of $ \mathcal{T}_{n}$ is the so-called the $\alpha$-stable tree \cite{Duq03}. 
 
 There is no condensation phenomenon neither for supercritical offspring distributions. Indeed, the study of size-conditioned Bienaymé trees with  supercritical offspring distribution is equivalent to the study of size-conditioned Bienamyé trees with critical finite variance offsprings by exponential tilting (see Sec.~\ref{ssec:tilting} below), which are covered by Aldous' previously mentioned  result.
        
        The so-called Cauchy case where $\alpha=1$ has only been considered quite recently in the critical case \cite{Kortchemski_Richier_2018a}, motivated by applications to random planar maps: a condensation phenomenon occurs, although on a slightly different scale (the unique vertex of maximal degree is of order $o(n)$, yet dominates the degrees of the other vertices). Limit theorems for the height of such trees have been established in \cite{addarioberry2023criticaltreesshortfat}.
        
In the recent years, it has been realized that Bienaymé trees in which a condensation phenomenon occurs code a variety of random combinatorial structures such as random planar maps \cite{Add19,JS15,Ric17,AM22,AFS24}, outerplanar maps \cite{SS17}, supercritical percolation clusters of random triangulations \cite{CK15}, random permutations \cite{BBFS20}, parking on random trees \cite{CC23} or minimal factorizations \cite{FK17}. See \cite{Stu16} for a combinatorial  framework and further examples. These applications are one of the motivations for the study of the fine structure of such large conditioned Bienaymé trees.

 \subsection{Comments on Theorem \ref{thm:main}.}   The goal of this paper is to cover the case where $\mu$ is subcritical and $\alpha=1$, which is the last missing case for heavy-tailed offspring distributions satisfying a local regularity assumption (see Sec.~\ref{ssec:tilting} below). The results of Theorem \ref{thm:main} (i), (ii) and (iii) (a) are the same as for subcritical offspring distributions with $\alpha>1$ \cite{Kortchemski_2013}. However, an interesting new phenomenon appears in the case $\alpha=1$: indeed, the sequence $ \left({\mathsf{H}(\mathcal{T}_n)}- {\log(n)}/{\log(1/m)} \right)_{n \geq 1}$ is not always tight (in contrast with the case $\alpha>1$ where it is always tight). The reason is that when $\alpha>1$ we always have $\sum_{n \geq 1} (n \log n) \mu_{n}  <\infty$, but in the case $\alpha=1$ we may have $\sum_{n \geq 1} (n \log n) \mu_n = \infty$; take for example $\mu_{n} \sim \frac{c}{(\log n)^{1 + \beta} n^{2}}$ with $\beta \in (0,1]$. See Proposition \ref{prop:ex} for the second order term for the magnitude of ${\mathsf{H}(\mathcal{T}_n)}$ in this particular case.

 It is also interesting to note that for subcritical offspring distributions, the behavior of large size-conditioned Bienaymé trees turns out to be similar for $\alpha=1$ and $\alpha \in (1,2]$, while for critical offspring distributions, the behavior of large size-conditioned Bienaymé trees for $\alpha=1$ and $\alpha \in (1,2]$ is quite different (there are no non-trivial scaling limits in the  case $\alpha=1$). 
    
   \subsection{Main ideas.} Using \cite{Ber17}, we show that the one-big jump principle of  \cite{AL11} used in the case $\alpha>1$ in \cite{Kortchemski_2013} holds when $\alpha=1$ as well. Theorem \ref{thm:main} (i) and (ii) then follow as in  \cite{Kortchemski_2013} from this one-big jump principle. However the proof of Theorem \ref{thm:main} (iii) concerning the height of $ \mathcal{T}_{n}$ requires different estimates. Indeed, the estimate $\P( \mathsf{H}( \mathcal{T}) \geq n) \sim c \cdot m^{n}$, where $ \mathcal{T}$ is an unconditioned $\mu$-Bienaymé tree,  used in    \cite{Kortchemski_2013}  for $\alpha>1$, is not true in general for $\alpha=1$.
   
    \subsection{Outline.} We first introduce Bienaymé trees and their associated random walks in Section \ref{sec:B}.  The main one big jump principle for conditioned random walks is presented in Section \ref{sec:jump}, and limit theorems for subcritical Cauchy Bienaymé trees are proved in Section \ref{sec:condensation}.

   \subsection{Acknowledgments.} We would like to thank the referees for their very careful reading and their helpful comments.
            
    \section{Bienaymé trees and random walks}
  \label{sec:B}
    
\subsection{Bienaymé trees.}            We consider plane trees, which are also sometimes called rooted ordered trees (see \cite[Sec.~1]{Le_Gall_2005} for definitions and background). For every plane tree $\tau$ and every vertex $v \in \tau$, we denote by $k_v(\tau)$ the outdegree (or number of children) of $v$, so that $\Delta(\tau)=\max_{v \in \tau} k_{v}(\tau)$ is the maximal outdegree of $\tau$.
            
            Given a probability distribution   $\mu=(\mu_{n})_{n \geq 0}$, on the nonnegative integers $\Z_{+}$, we denote by $\P_{\mu}$ the law of a Bienaymé tree with offspring distribution $\mu$. It satisfies for every finite tree $\tau$ the identity
                            $$\P_\mu(\tau) = \prod_{u\in \tau} \mu_{k_u(\tau)}.$$

For every $n \geq 1$, we denote by $ \mathcal{T}^{\mu}_{n}$ a Bienaymé tree with offspring distribution $\mu$ conditioned to have $n$ vertices (we always implicitly restrict to those values of $n$ for which this event has non-zero probability).
 
\subsection{Exponential tilting.}
\label{ssec:tilting}

Exponential tilting is a useful tool which allows to tune the mean of a Bienaymé tree without changing the law of the associated size-conditioned Bienaymé tree. It is essentially due to Kennedy \cite{Ken75} (in the context of branching processes).

\begin{proposition}[Kennedy]
\label{prop:tilting}
Let $\mu$ and $\nu$ be two probability distributions on $\Z_{+}$ such that for certain $a,\lambda>0$ we have $\mu_k=a \lambda^{k} \nu_k$ for every $k \geq 0$. Then $\mathcal {T}^{\mu}_n$ and $\mathcal {T}^{\nu}_n$ have the same distribution.
\end{proposition}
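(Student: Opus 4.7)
The plan is to compare $\P_\mu(\tau)$ and $\P_\nu(\tau)$ for an arbitrary plane tree $\tau$ with $|\tau|=n$ vertices and show that the ratio depends only on $n$; the equality of conditional laws then follows by renormalization.

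First I would write out the explicit product formula from the previous subsection: for any finite plane tree $\tau$,
$$\P_\mu(\tau) = \prod_{u \in \tau} \mu_{k_u(\tau)} = \prod_{u \in \tau} a\,\lambda^{k_u(\tau)}\, \nu_{k_u(\tau)} = a^{|\tau|}\, \lambda^{\sum_{u \in \tau} k_u(\tau)}\, \P_\nu(\tau).$$
The key combinatorial identity I would then invoke is that in any finite plane tree, the sum of the outdegrees equals the total number of edges, which equals $|\tau|-1$ (each non-root vertex is the child of exactly one vertex). Consequently, whenever $|\tau|=n$,
$$\P_\mu(\tau) = a^{n}\, \lambda^{n-1}\, \P_\nu(\tau).$$

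Summing this identity over all plane trees with exactly $n$ vertices gives $\P_\mu(|\T^\mu| = n) = a^n \lambda^{n-1}\, \P_\nu(|\T^\nu| = n)$, where $\T^\mu$, $\T^\nu$ denote unconditioned Bienaymé trees. In particular, the event has positive probability under $\mu$ if and only if it has positive probability under $\nu$. Dividing the previous display by this normalization constant, the factor $a^n \lambda^{n-1}$ cancels and we obtain
$$\P(\T^\mu_n = \tau) = \frac{\P_\mu(\tau)}{\P_\mu(|\T^\mu|=n)} = \frac{\P_\nu(\tau)}{\P_\nu(|\T^\nu|=n)} = \P(\T^\nu_n = \tau)$$
for every plane tree $\tau$ with $n$ vertices, which is the claim.

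There is no real obstacle here: the whole proof rests on the elementary observation that the total number of offspring in a tree with $n$ vertices is exactly $n-1$, so the tilting factor $a^{|\tau|}\lambda^{\sum k_u}$ is a function of $n$ alone and therefore disappears upon conditioning on the size. The only point that deserves a brief comment is to verify that $\mu$ is indeed a probability measure (so that $a$ and $\lambda$ are linked by $\sum_k a \lambda^k \nu_k = 1$), which ensures the formulas above make sense.
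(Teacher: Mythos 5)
Your proof is correct and is exactly the standard argument that underlies Kennedy's result (the paper itself supplies no proof, only a citation to \cite{Ken75}). The key observation that $\sum_{u\in\tau}k_u(\tau)=|\tau|-1$ makes the tilting factor $a^{|\tau|}\lambda^{\sum_u k_u(\tau)}$ a function of $|\tau|$ alone, so it cancels upon conditioning on the size; nothing is missing.
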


If $\mu$ and $\nu$ are related as in Proposition \ref{prop:tilting}, we say that they are equivalent.  It is a simple matter to characterize the class of offspring distributions $\mu$ which are equivalent to a critical offspring distribution. Indeed, let $F_{\mu}(x)=\sum_{k=0}^{\infty} \mu_k x^{k}$ be the generating function of $\mu$ and denote by $\rho$ its radius of convergence. Then for every $\lambda \in (0,\rho)$, the offspring distribution $\mu_{\lambda}$ with generating function $F_{\mu_{\lambda}}(x)=F_{\mu}(\lambda x)/F_{\mu}(\lambda)$ is equivalent to $\mu$ and has expectation $\lambda F'_{\mu}(\lambda)/F_{\mu}(\lambda)$. Thus $\mu$ is equivalent to a critical offspring distribution if and only if $ \lim_{\lambda \rightarrow  \rho} \lambda F'_{\mu}(\lambda)/F_{\mu}(\lambda) \geq 1$. In particular, observe that a supercritical offspring distribution (i.e.~that has mean greater than $1$) is always equivalent to a critical offspring distribution. 

As a consequence, if $\nu$ is a heavy-tailed offspring distribution of the form $\nu_n \sim{L(n)}/{n^{1+\alpha}}$ with $\alpha \geq 0$ and $L$ slowly varying then $\nu$ is equivalent to one of the following:
\begin{enumerate}
\item[(a)] a critical offspring distribution $\mu$ with finite variance. In this case the asymptotic behavior of $\T^{\mu}_{n}$ is known \cite{Ald93}.
\item[(b)] a critical offspring distribution $\mu$ with infinite variance and $\alpha \in (1,2]$. In this case the asymptotic behavior of $\T^{\mu}_{n}$ is known \cite{Duq03}.
\item[(c)] a critical offspring distribution $\mu$ with $\alpha=1$. In this case the asymptotic behavior of $\T^{\mu}_{n}$ is known \cite{addarioberry2023criticaltreesshortfat}.
\item[(d)] a subcritical offspring distribution with $\alpha>1$. In this case the asymptotic behavior of $\T^{\mu}_{n}$ is known \cite{Kortchemski_2013}.
\item[(e)] a subcritical offspring distribution $\mu$ with $\alpha=1$. This case is last missing case considered in this note.
\end{enumerate}
Observe that when $\nu$ is supercritical (which includes the case $\alpha<1$ since $\nu$ has then infinite mean) we are in case (a).

\subsection{Random walks.}
\label{ssec:RW}An important tool to study Bienaymé trees is the use of random walks. Given an offspring distribution $\mu$, let $X$ be a random variable with law given by $\P(X = i) = \mu_{i+1}$ for $i \in \Z_{\geq -1}$. Let $(X_i)_{i\geq 1}$ be i.i.d. random variables distributed as $X$ and let $(W_{k})_{k \geq 0}$ be the random walk defined by $W_{0}=0$ and $W_k= \sum_{i = 1}^k X_i$ for $k \geq 1$.

The key connection between Bienaymé trees and random walks is that is it possible to bijectively code plane trees by the so-called \L ukasiewicz path, in such a way that the \L ukasiewicz path of a Bienaymé tree has the law of $(W_{k})_{k \geq 0}$ stopped at the first hitting time of the negative integers. As a consequence, studying  $ \mathcal{T}^{\mu}_{n}$ is equivalent to studying the random walk $(W_{k})_{k \geq 0}$ conditioned on hitting the negative integers at time $n$ (``excursion''-type conditioning). In turn, this is equivalent to studying the random walk $(W_{k})_{k \geq 0}$ conditioned on hitting $-1$ at time $n$ (``bridge''-type conditioning) using the Vervaat transform. One of the key implications is that the collection of outdegrees minus $1$ of $ \mathcal{T}_{n}$ has the same law as the jumps of $(W_{k})_{0 \leq k \leq n}$ under the conditional probability $\P(\, \cdot \mid W_{n}=-1)$, see \cite[Sec.~6.1]{Pit06} for background.

From now on, assume that $\mu$ satisfies \eqref{eq:H_loc_mu}. We  need to introduce two sequences related to the asymptotic behavior of $(W_{n})_{n \geq 1}$.  Observe that $\E[X] = m-1 <0$. Let $(a_n:n\geq 1)$ and $(b_n:n\geq 1)$ be sequences such that
        \begin{equation}
            n\P(X \geq a_n) \to 1, \hspace{1cm} b_n=n\E[X\mathbb{1}_{\abs{X}\leq a_n}],
        \end{equation}
Then the following convergence holds in distribution
        \begin{equation}\label{eq:conv to C_1}
            \frac{X_1+ \cdots +X_n-b_n}{a_n} \dist \Cauchy,
        \end{equation}
        where $\Cauchy$ is a random variable, with Laplace transform given by $\E[e^{-\lambda \Cauchy}] = e^{\lambda \log{\lambda}}$ for $\lambda>0$, see \cite[Chap. IX.8 and Eq. (8.15) p.315]{feller-vol-2}.   For this reason, we often call $(a_n:n\geq 1)$ the scaling sequence and $(b_n:n\geq 1)$ the centering sequence.  The random variable $\Cauchy$ is an asymmetric Cauchy random variable with skewness $1$. In addition $(a_{n})$ and $(b_{n})$ are regularly varying sequences of index $1$.
        
%
        We will also need to introduce an auxiliary slowly varying function. For every $n \geq 1$ set $\ell^\star(n) \coloneqq \sum_{k=n}^\infty L(k)/k$ for $n \geq 1$, which is a finite quantity since $\mu$ has finite mean.
        
        \begin{lemma}
        \label{lem:bnan} The following assertions hold as $n\rightarrow \infty$.
        \begin{enumerate}
        \item[(i)]  The function $\ell^{\star}$ is slowly varying, satisfies $\ell^{\star}(n) \rightarrow 0$ and  $L(n) = o(\ell^\star(n))$.
        \item[(ii)] We have $b_{n}+ n(1-m) \sim - n \ell^{\star}(a_{n})$ and $b_n \sim - n(1-m)$.
        \item[(iii)] We have  $a_n = o(n)$.
        \end{enumerate}
        \end{lemma}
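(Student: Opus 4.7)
The plan is to handle (i) via the uniform convergence theorem for slowly varying functions together with Karamata's theorem, then derive (ii) from a truncated first-moment computation, and (iii) by inverting the tail asymptotics of $X$ combined with the observation that $L(n) \to 0$. As a preliminary step I would establish the tail estimate: since $\P(X = i) = \mu_{i+1}$ and $\mu_n \sim L(n)/n^2$, Karamata's theorem on tails of regularly varying functions of index $-2$ gives $\P(X \geq k) \sim L(k)/k$ as $k \rightarrow \infty$. Subcriticality forces $\sum_k k\mu_k = m < \infty$, and since $k\mu_k \sim L(k)/k$, the series $\sum L(k)/k$ converges, so $\ell^*(n)$ is well-defined and $\ell^*(n) \to 0$.

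For (i), I would fix $A > 1$ and apply uniform convergence of $L$ on $[1,A]$ to obtain
\[
\sum_{k=n}^{\lfloor An \rfloor} \frac{L(k)}{k} = L(n)(1 + o(1)) \sum_{k=n}^{\lfloor An \rfloor} \frac{1}{k} \sim L(n) \log A.
\]
Since $\ell^*(n) \geq \sum_{k=n}^{\lfloor An \rfloor} L(k)/k$ and $A$ is arbitrary, this yields $\ell^*(n)/L(n) \rightarrow \infty$, i.e.\ $L(n) = o(\ell^*(n))$. Slow variation of $\ell^*$ then follows from $\ell^*(n) - \ell^*(\lfloor An \rfloor + 1) \sim L(n) \log A = o(\ell^*(n))$, treating $A \in (0,1)$ symmetrically by inversion.

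For (ii), since $X \geq -1$ and $a_n \rightarrow \infty$, for $n$ large $\{|X| \leq a_n\} = \{X \leq a_n\}$, so using $\E[X] = m-1$,
\[
b_n + n(1-m) = -n \,\E[X \mathbb{1}_{X > a_n}].
\]
Reindexing $\E[X \mathbb{1}_{X > a_n}] = \sum_{i > a_n} i\,\mu_{i+1}$ via $j = i+1$, using $(j-1)\mu_j \sim L(j)/j$ and the slow variation of $\ell^*$ from (i),
\[
\E[X \mathbb{1}_{X > a_n}] \sim \sum_{j > a_n} \frac{L(j)}{j} \sim \ell^*(a_n),
\]
which gives $b_n + n(1-m) \sim -n\ell^*(a_n)$. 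Since $\ell^*(a_n) \to 0$ and $1-m > 0$, the equivalent $b_n \sim -n(1-m)$ then follows.

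For (iii), the tail estimate and the defining relation $n\P(X \geq a_n) \to 1$ give $a_n \sim n L(a_n)$, so $a_n/n \sim L(a_n)$, and the statement reduces to showing $L(n) \to 0$. This is the step I expect to require the most care, since slow variation alone does not force $L$ to vanish at infinity; one must exploit the summability of $L(k)/k$. Concretely, if $L(n_k) \geq \varepsilon$ along a subsequence chosen with $n_{k+1} > 2 n_k$, then uniform convergence gives $L(m) \geq \varepsilon/2$ on $[n_k, 2n_k]$ for $k$ large, producing infinitely many disjoint blocks of $\sum L(k)/k$ each of mass at least $(\varepsilon/2)\log 2$, contradicting convergence of the series. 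Thus $L(n) \to 0$, and $a_n = o(n)$ follows.
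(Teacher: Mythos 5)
Your proposal is correct, and the overall structure (tail estimate $\P(X\geq k)\sim L(k)/k$, truncated-mean computation for (ii), $a_n\sim nL(a_n)$ for (iii)) matches the paper's. There are two places where your route diverges. For (i), the paper simply cites Proposition 1.5.9b of Bingham--Goldie--Teugels for slow variation of $\ell^*$ and $L=o(\ell^*)$, whereas you reprove both by hand via the uniform convergence theorem on dyadic blocks; your argument is valid and self-contained, essentially reproducing the BGT proof. For (iii), you flag $L(n)\to 0$ as the delicate step and supply a contradiction argument (infinitely many disjoint blocks $[n_k,2n_k]$ each contributing mass $\gtrsim\varepsilon\log 2$ to $\sum L(k)/k$). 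This works, but it is more elaborate than needed: you have already established $L(n)=o(\ell^*(n))$ and $\ell^*(n)\to 0$, and multiplying these two facts gives $L(n)=(L(n)/\ell^*(n))\cdot\ell^*(n)\to 0$ immediately. This is exactly what the paper does, writing $a_n/n \leq L(a_n)/\ell^*(a_n)\to 0$ for large $n$ once $\ell^*(a_n)\leq 1$. So your contradiction argument is a correct but unnecessary detour; the direct product bound is both shorter and already within reach from your own part (i).
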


\begin{proof}
 For the first assertion, by definition we clearly have $\ell^{\star}(n) \rightarrow 0$ as $n \rightarrow \infty$. The two other properties follow from  \cite[Proposition 1.5.9b]{Bingham_Goldie_Teugels_1987}.
 
 For (ii),  write
$b_{n}=n(\E[X]-\E[X \mathds{1}_{\abs{X}>a_n}]) =n(m-1)-n\E[X \mathds{1}_{\abs{X}>a_n}]
$
and observe that by assumption \eqref{eq:H_loc_mu} we have $n\E[X \mathds{1}_{\abs{X}>a_n}] \sim n \ell^\star(a_n)$, which gives the first asymptotic estimate. The second one follows from the fact that $\ell^\star(a_n) \rightarrow 0$ because $a_{n} \rightarrow \infty$.

Finally, for the last assertion, we combine the fact that $a_{n} \sim n L(a_{n})$ (by definition of $a_{n}$) with the fact that $\ell^{\star}(n) \rightarrow 0$ and write for $n$ sufficiently large
$$ \frac{a_{n}}{n} \leq  \frac{n L(a_{n})}{n \ell^{\star}(a_{n})},$$
which converges to $0$ since $L(n) = o(\ell^\star(n))$.
\end{proof}

In particular, \eqref{eq:conv to C_1} and Lemma  \ref{lem:bnan} imply that
   \begin{equation}\label{eq:proba}
            \frac{X_1+ \cdots +X_n}{n (m-1)} \prob 1.
        \end{equation}
        Also observe that by Lemma \ref{lem:bnan} (i) and (ii), it is not true that $(W_{n}- \E[W_{n}])/a_{n})$ converges in distribution as $n \rightarrow \infty$: in contrast with the case $\alpha>1$ some care is needed in handling the centering term.

\begin{example}
\label{ex:anl}
In some particular cases we can explicitly determine the asymptotic behavior of $a_n$ and $\ell^\star$: 
\begin{itemize}
\item[(i)] If $L(n)\sim c  / \log(n)^{1+\beta}$ with $c,\beta>0$, then $\ell^\star(n) \sim c/(\beta \log(n)^{\beta})$ and $a_n \sim   n L(n)$ since $\log(a_n) \sim \log(n)$.
\item[(ii)] Set $\log_{(1)}(x)=\log(x)$ and for every $k \geq 1$ define recursively $\log_{(k+1)} (x) = \log(\log_{(k)}(x))$. For
 \[
 L(n) \quad \sim \quad  \frac{c}{(\log_{(k)}(n))^{2}} \prod_{i=1}^{k-1} \frac{1}{\log_{(i)}(n) } \]
 with $k \geq 2$ and $c>0$, we have $\ell^\star(n) \sim c/\log_{(k)}(n)$ and $a_n \sim   n L(n)$  since $\log(a_n) \sim \log(n)$. 
\item[(iii)]   For $L(n) \sim  c   \exp(-\log(n)^{\beta}) $ with $\beta \in (0,1)$ and $c>0$, we have $\ell^\star(n) \sim   c/\beta \cdot \log(n)^{1-\beta} \exp(-\log(n)^{\beta})$. The asymptotic behavior of $a_n$ depends on the value of $\beta$, for example when $\beta<1/2$ we have $a_n \sim c   n\exp(-\log(n)^{\beta}) $ and when $1/2 \leq \beta <2/3$ we have $a_n \sim c   n \exp(-\log(n)^{\beta} + \beta \log(n)^{2 \beta-1})$.
\end{itemize}
\end{example}

\section{A one big jump principle for conditioned random walks}
\label{sec:jump}

Here we consider an offspring distribution $\mu$ satisfying \eqref{eq:H_loc_mu} and denote by  $(W_{k})_{k \geq 0}$   the random walk defined in Sec.~\ref{ssec:RW}. The key ingredient that enables us to use the results of \cite{Kortchemski_2013} is a one-big jump principle for the random walk under the conditional probability $\P(\cdot\mid W_n=-1)$. 

In the case $\alpha>1$, this was obtained in \cite{Kortchemski_2013} thanks to a general result due to Armend\'ariz \& Loulakis \cite{AL11}, using local estimates for random walks obtained in \cite{DDS08}. In the case $\alpha=1$, we show that we can still use the result of Armend\'ariz \& Loulakis \cite{AL11}, using instead local estimates for random walks obtained by Berger \cite{Ber17}.

 For every integer $n \geq 1$, let $V_{n}$ be the index of the first maximal jump of $(W_{k})_{0 \leq k \leq n}$ defined by
            \begin{equation*}
                V_n \coloneqq \inf\Big\{1\leq j\leq n:X_j=\max\{X_i:1\leq i\leq n\}\Big\}.
            \end{equation*}
            Then we define $(X_1^{(n)},...,X_{n-1}^{(n)})$ to be the random variable distributed as the law of $(X_1,...,X_{V_n-1},X_{V_n+1},...,X_n)$ under $\P(\cdot\mid W_n=-1)$. The following theorem states that once the first maximal jump of $(W_k)_{0 \leq k \leq n}$ under $\P(\cdot\mid W_n=-1)$ is removed, the remaining increments behave asymptotically like i.i.d. random variables.
            \begin{theorem}
            \label{th:asy iid}
                We have
                \begin{equation*}
                    d_{\mathrm{TV}} \left((X_i^{(n)}:1\leq i\leq n-1),(X_i:1\leq i\leq n-1)\right) \to 0
                \end{equation*}
                where $d_{\mathrm{TV}}$ denotes the total variation distance on $\mathbb{R}^{n-1}$.
            \end{theorem}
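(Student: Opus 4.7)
The plan is to apply the general one-big-jump principle of Armend\'ariz \& Loulakis \cite{AL11} as a black box; the entire content of the theorem then reduces to checking that the hypotheses of \cite{AL11} are satisfied in the Cauchy subcritical case. Recall that their result says the following: if $(X_i)$ are i.i.d.\ subexponential with a common law, and if the conditioning event $\{W_n = s_n\}$ admits a suitable local estimate of ``one-big-jump'' type, namely
\[
\P(W_n = s_n) \;\sim\; n \,\P(X = s_n - b_n) \qquad (n\to\infty),
\]
then removing the (first) maximal jump from $(X_1,\dots,X_n)$ under $\P(\,\cdot\mid W_n=s_n)$ yields a vector whose law converges to $\P^{\otimes (n-1)}$ in total variation. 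Thus I would structure the proof in three steps.

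First, I would verify the subexponential assumption: since $\mu$ satisfies~\eqref{eq:H_loc_mu} the law of $X$ has regularly varying tail of index $-1$, hence is subexponential, and moreover it is lattice-valued on $\Z_{\geq -1}$, which is the setting of \cite{AL11}. Second, I would apply the local estimate of Berger~\cite{Ber17} for lattice random walks in the domain of attraction of an asymmetric Cauchy law. Berger's theorem gives a local large deviation result of precisely the form needed: for integers $k$ in the ``heavy-tail regime'' (i.e.\ with $|k-b_n| \gg a_n$),
\[
\P(W_n = k) \;\sim\; n\,\P(X = k - b_n).
\]
Applied at $k = -1$, combined with $b_n \sim -n(1-m)$ and $a_n = o(n)$ from Lemma~\ref{lem:bnan}, this yields
\[
\P(W_n = -1) \;\sim\; n\,\P\bigl(X = -1 - b_n\bigr) \;\sim\; n\,\mu_{\lfloor n(1-m)\rfloor} \;\sim\; \frac{L(n(1-m))}{n(1-m)^2},
\]
which is the precise local estimate needed in the Armend\'ariz--Loulakis framework. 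Third, I would feed these two inputs into \cite{AL11}: the conclusion that removing the first maximal increment produces an $(n-1)$-tuple whose law is close in total variation to $(X_1,\dots,X_{n-1})$ follows essentially verbatim, exactly as was done in \cite{Kortchemski_2013} in the case $\alpha>1$ using the local estimates of \cite{DDS08}.

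The main obstacle is the second step, the local estimate at the point $-1$. Unlike the case $\alpha>1$, one cannot use a clean centred local CLT: since $-1$ is at distance of order $n(1-m) \gg a_n$ from the drift $b_n$, one is squarely in the local large-deviation regime. Moreover, the centering $b_n$ itself carries a slowly-varying correction of order $n\ell^{*}(a_n)$ (Lemma~\ref{lem:bnan}(ii)), so one must argue that the shift of the target point $-1$ by this correction does not affect the tail equivalent $\mu_{\lfloor n(1-m)\rfloor}$; this follows from the fact that $n(1-m)/a_n \to \infty$ combined with the uniform convergence property of the slowly varying function $L$ on intervals of the form $[\varepsilon n, n]$. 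Once Berger's local large deviation asymptotic is secured at $k=-1$, the remainder of the argument is a direct invocation of \cite{AL11}, with no tree-specific input beyond what is already recalled in Section~\ref{ssec:RW}.
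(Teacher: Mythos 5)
Your overall strategy is the same as the paper's: verify the hypotheses of the Armend\'ariz--Loulakis theorem by means of Berger's Cauchy-regime local estimates, in parallel with what was done for $\alpha>1$ via \cite{DDS08}. However, your statement of what \cite{AL11} requires is materially weaker than what that theorem actually asks for, and this hides the real work. Theorem 1 of \cite{AL11} is stated for a \emph{centred} walk (so one must first pass to $\overline X_i = X_i + (1-m)$ and $\overline W_n = W_n + (1-m)n$, with the conditioning point becoming $x=(1-m)n-1$), and its hypotheses are \emph{two} uniform conditions, not one single-point local estimate. Condition (2.6) requires
\[
\sup_{x\geq \varepsilon n}\left|\frac{\P(\overline W_n\in(x,x+1])}{n\,\P(\overline X_1\in(x,x+1])}-1\right|\to 0,
\]
i.e.\ the local large-deviation equivalence uniformly over the whole range $x\geq \varepsilon n$, not only at the target $-1$; this is where Berger's Theorem 2.4 enters, together with the observation that $|x-m_n|/a_n\to\infty$ uniformly (using $m_n:=b_n+(1-m)n\sim -n\ell^*(a_n)=o(n)$ and $a_n=o(n)$). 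Your proposal never formulates this uniformity, and in fact insists that ``the main obstacle is the local estimate at the point $-1$,'' which conflates the conditioning point with the range over which (2.6) must hold.

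The second, and entirely unaddressed, gap is condition (3.3) of \cite{AL11}. It requires (a) tightness of $\overline W_n/\widehat b_n$ at the correct \emph{subdiffusive} scale $\widehat b_n = n\ell^*(a_n)$ (note that $a_n=o(\widehat b_n)$, so this is not the scale $a_n$ you might expect, and the verification hinges on Lemma~\ref{lem:bnan}), and (b) the uniform continuity estimate
\[
\sup_{x\geq\varepsilon n}\sup_{|y|\leq L\widehat b_n}\left|1-\frac{\P(\overline X_1\in(x-y,x-y+1])}{\P(\overline X_1\in(x,x+1])}\right|\to 0,
\]
which is obtained from the uniform convergence theorem for slowly varying functions because $\widehat b_n=o(n)$. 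Your ``uniform convergence property of $L$'' remark gestures in this direction but is not attached to the right statement, and the tightness claim is omitted entirely. As it stands your argument would not compile against the conditions of \cite{AL11}; you need to set up the centred walk, establish (2.6) uniformly, and check (3.3) with the scale $\widehat b_n$.
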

            
\begin{proof}
To simplify notation, set $\gamma=1-m$. For every $n \geq 1$, set $\overline{X}_{n}=X_{n}+\gamma$ and $\overline{W}_{n}=W_{n}+\gamma n$, so that $(\overline{W}_{n})_{n \geq 0}$ is a centered random walk. Similarly, set  $(\overline{X}_1^{(n)}, \ldots ,\overline{X}_{n-1}^{(n)})=(X_1^{(n)}+\gamma, \ldots ,X_{n-1}^{(n)}+\gamma)$. Fix $\varepsilon \in (0,\gamma)$. We check that we can apply Theorem 1 in \cite{AL11}, with $\mu$ being the law of $\overline{X}_{1}$, $\Delta=(0,1]$, $q_{n}=\varepsilon n$ and $x=\gamma n-1$. This will indeed imply that 
$$
                    d_{\mathrm{TV}}\left((\overline{X}_i^{(n)}:1\leq i\leq n-1),(\overline{X}_i:1\leq i\leq n-1)\right) \to 0,
$$
giving the desired result.

In order to apply  Theorem 1 in \cite{AL11}, we check that condition (2.6) there holds with $d_{n}=\varepsilon n$ and that condition (3.3) there holds also with $\ell_{n}=\varepsilon n$. Recall the definition of $\ell^\star$ introduced just before Lemma \ref{lem:bnan}.

\emph{Condition (2.6).} We need to check that
$$
\lim_{n \rightarrow \infty} \sup_{x \geq \varepsilon n} \left| \frac{\P(\overline{W}_{n} \in (x,x+1])}{ n \P(\overline{X}_{1} \in (x,x+1])} -1 \right|=0,
$$
or, equivalently, setting $m_{n}=b_{n}+\gamma n$,
\begin{equation}
\label{eq:26} 
\lim_{n \rightarrow \infty} \sup_{x \geq \varepsilon n} \left| \frac{\P({W}_{n}-b_{n} \in (x-m_{n},x-m_{n}+1])}{ n \P({X}_{1} \in (x-\gamma,x-\gamma+1])} -1 \right|=0.
\end{equation}
We claim that $|x-m_{n}|/a_{n} \rightarrow \infty$  uniformly in $x \geq \varepsilon n$. Indeed, by Lemma \ref{lem:bnan} (ii), we have $m_{n} \sim -n \ell^{\star}(a_{n})$, so that $|m_{n}|=o(n)$ since $\ell^{\star}(n) \rightarrow 0$ as $n \rightarrow \infty$. It follows that $|x-m_{n}| \sim x $  uniformly in $x \geq \varepsilon n$, and we get our claim since $a_n = o(n)$ (Lemma \ref{lem:bnan} (iii)). This puts us in position to use Theorem 2.4 in \cite{Ber17} (in the reference we take $\alpha= 1$ and $x=-\lfloor b_n \rfloor -1$), which gives that 
\begin{equation}
\label{eq:ber}\lim_{n \rightarrow \infty} \sup_{x \geq \varepsilon n} \left| \frac{\P({W}_{n}-b_{n} \in (x-m_{n},x-m_{n}+1])}{ n \P({X}_{1} \in(x-m_{n},x-m_{n}+1])} -1 \right|=0.
\end{equation}
Now observe that since $L$ is slowly varying at infinity, for any sequence $\delta_n \rightarrow 0$ of positive real numbers and any sequence of integers $z_n \rightarrow \infty$ we have the convergence $\sup_{(1-\delta_n) z_n \leq y \leq (1+\delta_n) z_n} L(y)/L(z_n) \rightarrow 1$ (this follows e.g. from the representation theorem for slowly varying functions). Using \eqref{eq:H_loc_mu} and $|x-m_{n}| \sim x$ uniformly in $x\geq \epsilon n$ together with the assumption that $L(\cdot)$ is slowly varying at infinity, we get
$$ \lim_{n \rightarrow \infty} \sup_{x \geq \varepsilon n} \left| \frac{ \P({X}_{1} \in(x-m_{n},x-m_{n}+1])}{  \P({X}_{1} \in(x-\gamma,x-\gamma+1])} -1 \right|=0.$$
Combined with \eqref{eq:ber} we get \eqref{eq:26}.

\emph{Condition (3.3).} Set $\widehat{b}_{n}= n \ell^{\star}(a_{n})$. We check that $(\overline{W}_{n}/\widehat{b}_{n})_{n \geq 1}$ is tight and that for every $L>0$ we have
\begin{equation}
\label{eq:33} \sup_{x \geq \varepsilon n} \sup_{|y| \leq L \widehat{b}_{n}} \left| 1- \frac{\P(\overline{X}_{1} \in (x-y,x-y+1])}{\P(\overline{X}_{1} \in (x,x+1])} \right| \to 0.
\end{equation}
To check tightness, write
$$ \frac{\overline{W}_{n}}{\widehat{b}_{n}} = \frac{X_{1}+ \cdots+X_{n}-b_{n}}{a_{n}} \cdot \frac{a_{n}}{\widehat{b}_{n}}+ \frac{b_{n}+\gamma n}{\widehat{b}_{n}},$$
which implies tightness since $a_{n}/\widehat{b}_{n} \sim L(a_{n})/\ell^{\star}(a_{n}) \rightarrow 0$ and $ ({b_{n}+\gamma n})/{\widehat{b}_{n}} \rightarrow -1$ by Lemma \ref{lem:bnan} (ii). 
The convergence \eqref{eq:33} readily follows from the fact that $\widehat{b}_{n}=o(n)$ and the fact that for every $\delta>0$ and for every positive sequence $(\eta_{n})$ of real numbers such that $\eta_{n}=o(n)$ we have
$$\sup_{x \geq \varepsilon n} \sup_{|u| \leq \eta_{n}} \left| \frac{L(x+u)}{L(x)}-1\right|\to 0.$$
This can e.g.~be seen using the representation theorem for slowly varying functions \cite[Theorem 1.3.1]{Bingham_Goldie_Teugels_1987}. This completes the proof.
\end{proof}

This establishes the same one-big jump principle as when $\mu$ is subcritical and $\mu(n) \sim L(n)/n^{1+\alpha}$ with $\alpha>1$, which is  Theorem 2.1 in \cite{Kortchemski_2013}.

\section{Condensation in subcritical Cauchy Bienaymé trees}
\label{sec:condensation}

We are now ready to establish our results for subcritical Cauchy Bienaymé trees.    As before, we consider an offspring distribution $\mu$ satisfying \eqref{eq:H_loc_mu} and denote by  $(W_{k})_{k \geq 0}$   the random walk defined in Sec.~\ref{ssec:RW}. Recall that $ \mathcal{T}_{n}$ denotes a Bienaymé tree with offspring distribution $\mu$ conditioned to have $n$ vertices.

\subsection{Condensation phenomenon.} Theorem \ref{thm:main} (i) and (ii) are proved  in the same way as Theorem 1 and Theorem 2 are proved in \cite{Kortchemski_2013} in the case where $\mu$ is subcritical and $\mu(n) \sim L(n)/n^{1+\alpha}$ with $\alpha>1$. Indeed, for Theorem \ref{thm:main} (i), by combining the one big jump principle (Theorem \ref{th:asy iid}) with the fact that the collection of outdegrees minus $1$ of $ \mathcal{T}_{n}$ has the same law as the collection of jumps of $(W_{k})_{0 \leq k \leq n}$ under the conditional probability $\P(\, \cdot \mid W_{n}=-1)$, we get
\begin{equation}
\label{eq:dtvDelta}
\dtv \left( \Delta(\mathcal{T}_n), - (X_{1}+ \cdots +X_{n-1}) \right) \to 0,
\end{equation}
which by \eqref{eq:proba} yields the first convergence of Theorem \ref{thm:main} (i). Also,  
$$\dtv \left( \Delta^{2}(\mathcal{T}_n)-1, \max(X_{1}, \ldots,X_{n-1}) \right) \to 0,$$
which using the fact that $\P(X_{1} \geq u) \sim L(u)/u$ as $u \rightarrow \infty$ implies that $ \max(X_{1}, \ldots,X_{n-1})/a_{n}$ converges in distribution to a random variable $Y$ with law given by $\P(Y \leq u)=\exp(-1/u)$ for $u>0$. Since $a_{n}=o(n)$ this implies the second convergence of Theorem \ref{thm:main} (i).

Theorem \ref{thm:main} (ii) is established in the exact same way Theorem 2 in \cite{Kortchemski_2013} is proved, taking as input the one big jump principle (Theorem \ref{th:asy iid} in our case $\alpha=1$).

Also, by combining \eqref{eq:conv to C_1} with \eqref{eq:dtvDelta} we immediately get the following fluctuations for $\Delta(\mathcal{T}_n)$:

\begin{corollary}
\label{cor:fluctuations}
We have
$$            \frac{\Delta(\mathcal{T}_n)+b_n}{a_n} \dist -\Cauchy.$$
\end{corollary}

As suggested by an anonymous referee, it would be very interesting to extend this central limit theorem to a local limit theorem, motivated by Stufler's result \cite[Lemma 2.2]{Stu20} for $\alpha>1$ (the proof of \cite[Lemma 2.2]{Stu20} strongly relies on the fact that $\alpha>1$, so new input is needed). 
 
\subsection{Height of $\mathcal{T}_{n}$.} In  \cite{Kortchemski_2013}, the proof of the fact that $\mathsf{H}(\mathcal{T}_{n})/ \log(n) \rightarrow 1/\log(1/m)$ in probability when $\mu$ is subcritical and $\mu(n) \sim L(n)/n^{1+\alpha}$ with $\alpha>1$ uses the asymptotic estimate $\P( \mathsf{H}( \mathcal{T}) \geq n) \sim c \cdot m^{n}$, where $ \mathcal{T}$ is an unconditioned $\mu$-Bienaymé tree. However, this estimate is not true in general when $\alpha=1$. Indeed, by \cite[Theorem 2]{HSV67}, this asymptotic estimate holds if and only if $\sum_{n \geq 1} ( n \log n) \mu_n <\infty$; when this sum is infinite we have $\P( \mathsf{H}( \mathcal{T}) \geq n) /m^{n} \rightarrow 0$ as $n \rightarrow \infty$. Observe that in the case $\alpha=1$, as was already mentioned we can have $\sum_{n \geq 1} ( n \log n) \mu_{n} =\infty$.

In the case $\alpha=1$, we use the same idea as in \cite{Kortchemski_2013}, which consists in using the fact that, roughly speaking, the trees grafted on the vertex of maximal degree of $ \mathcal{T}_{n}$ are asymptotically independent $\mu$-Bienaymé trees, combined with a bound on $\P( \mathsf{H}( \mathcal{T}) \geq n)$.

We need to introduce some notation. For every finite plane tree $\tau$, denote by $u_{\star}(\tau)$ the vertex of maximal degree (first in lexicographical order if not unique) and for every $1 \leq i \leq \Delta(\tau)$, let $\tau_{i}$ be the tree of descendants of the $i$-th child of $u_{\star}(\tau)$. For $i > \Delta(\tau)$ we set $\tau_{i}=\varnothing$. For $1 \leq j \leq k$, we let $ [\tau]_{j,k}$ be the forest defined by $[\tau]_{j,k}= \{\tau_{i} : j \leq i \leq k \}$. Finally for every $i \geq 1$, denote by $\mathcal{F}^{ i}$ the forest of $i$ independent $\mu$-Bienaymé trees.

\begin{proposition}
\label{prop:forests}For every $\delta \in [0,1-m)$, we have
$\dtv \left(  \left[  \mathcal{T}_{n}  \right]_{1,\lfloor \delta n\rfloor}, \mathcal{F}^{ \lfloor \delta n\rfloor}  \right) \rightarrow 0$ as $n \rightarrow \infty$.
\end{proposition}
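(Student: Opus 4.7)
I would translate the statement to the random walk side via the \L ukasiewicz encoding and apply the one big jump principle of Theorem \ref{th:asy iid}. Under this encoding of $\mathcal{T}_n$, the condensation vertex $u_\star(\mathcal{T}_n)$ is visited at step $V_n$, and the subtrees $\tau_1, \tau_2, \ldots$ grafted on $u_\star$ are successively encoded by the excursions of the walk below level $W_{V_n}$ occurring after time $V_n$. Equivalently, writing $(X_1^{(n)}, \ldots, X_{n-1}^{(n)})$ for the sequence of jumps with the one at $V_n$ removed, the forest $[\mathcal{T}_n]_{1,\lfloor \delta n\rfloor}$ is obtained by applying the standard tree-extraction procedure (stop at the $\lfloor \delta n\rfloor$-th new running minimum) to the tail $(X_{V_n}^{(n)}, X_{V_n+1}^{(n)}, \ldots, X_{n-1}^{(n)})$.

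By Theorem \ref{th:asy iid} I would couple $(X_i^{(n)})_{1 \leq i \leq n-1}$ with an i.i.d.\ sequence $(\widetilde{X}_i)_{1 \leq i \leq n-1}$ of law $X$ so that the two coincide with probability $1-o(1)$; on this event, $[\mathcal{T}_n]_{1, \lfloor \delta n \rfloor}$ equals the forest of the first $\lfloor \delta n \rfloor$ trees read off from the shifted tail $(\widetilde{X}_{V_n}, \widetilde{X}_{V_n+1}, \ldots)$, provided enough steps are available. Since $\E[\widetilde{X}_1] = m-1<0$, the law of large numbers shows that this extraction consumes of order $\delta n/(1-m)$ steps, and the hypothesis $\delta < 1-m$ then leaves a positive fraction of the sequence unused, as long as $V_n=o_P(n)$. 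The latter indeed holds: the condensation phenomenon combined with the tightness of the height $\mathsf{H}_\Delta(\mathcal{T}_n)$ (Theorem \ref{thm:main}~(ii)) implies that $V_n$ is in fact tight, since it equals one plus the total size of the subtrees grafted to the left of the spine from the root to $u_\star$, and both the spine length and the left-sibling structure along it have tight limiting distributions.

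Finally, by the shift-invariance of the i.i.d.\ sequence $(\widetilde{X}_i)$, the forest extracted from $(\widetilde{X}_{V_n}, \widetilde{X}_{V_n+1}, \ldots)$ has the same law as $\mathcal{F}^{\lfloor \delta n\rfloor}$, which closes the argument up to a vanishing total variation error. The main technical point is precisely this last step: Theorem \ref{th:asy iid} only controls the marginal law of the residual sequence, whereas to read the trees from a random starting index $V_n$ one needs either the approximate independence of $V_n$ from the residual sequence (a byproduct of the Armend\'ariz--Loulakis construction in the spirit of the one big jump principle), or an appropriate conditional version of Theorem \ref{th:asy iid} obtained by pre-conditioning on the value of $V_n$. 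This follows the same scheme as in \cite{Kortchemski_2013} for $\alpha > 1$.
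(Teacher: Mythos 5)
The paper gives no proof for this proposition beyond the one-line remark that it ``is established in the exact same way Corollary 2.7 is proved in \cite{Kortchemski_2013}''; your attempt to flesh this out is in the right spirit, but there are two concrete problems. First, you conflate two different random indices. The $V_n$ appearing in Theorem~\ref{th:asy iid} is the index of the maximal increment of the \emph{bridge} $(W_k)_{0\le k\le n}$ under $\P(\cdot\mid W_n=-1)$; by exchangeability of the bridge increments it is (essentially) uniform on $\{1,\dots,n\}$, hence certainly \emph{not} tight, nor $o_P(n)$. What is tight is the depth-first index of $u_\star(\mathcal{T}_n)$ in the \L ukasiewicz \emph{excursion} of $\mathcal{T}_n$; these two are related only through the random Vervaat rotation, and Theorem~\ref{th:asy iid} is a statement about the bridge residual, not the excursion residual. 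Your ``$V_n$ is tight'' step therefore does not apply to the object you then feed into Theorem~\ref{th:asy iid}, and in any case the appeal to Theorem~\ref{thm:main}(ii) risks being circular, since the height estimates in the paper are derived \emph{from} Proposition~\ref{prop:forests}.

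Second, the worry you raise at the end is the real one, and your two suggested remedies (``byproduct of Armend\'ariz--Loulakis'' or ``conditional version of Theorem~\ref{th:asy iid}'') are not what is needed; the missing ingredient is exchangeability. Under $\P(\cdot\mid W_n=-1)$ the increments $(X_1,\dots,X_n)$ are exchangeable, hence the cyclically re-indexed residual $(X_{V_n+1},\dots,X_n,X_1,\dots,X_{V_n-1})$ (the $n-1$ increments read in circular order starting right after the maximal jump) has the \emph{same law} as the residual $(X_1^{(n)},\dots,X_{n-1}^{(n)})$ in its original order, which Theorem~\ref{th:asy iid} controls. Moreover this cyclically re-indexed residual is the same whether one starts from the bridge or from its Vervaat rotation, and it is exactly the sequence from which the tree-extraction procedure reads off the subtrees $\tau_1,\tau_2,\dots$ of $u_\star$. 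With this identification the ``enough steps'' point requires no tightness of any index: the re-indexed residual has $n-1$ entries and extracting $\lfloor\delta n\rfloor$ trees consumes $\sim \delta n/(1-m)$ of them in probability, which is strictly less than $n-1$ precisely because $\delta<1-m$. This is the argument of \cite[Cor.~2.7]{Kortchemski_2013} that the paper invokes.
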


This is established in the exact same way Corollary 2.7 is proved in  \cite{Kortchemski_2013},  again taking as input the the one big jump principle (Theorem \ref{th:asy iid} in our case $\alpha=1$).

For every $n \geq 0$ set 
$Q_n = \P(H(\mathcal{T})\geq n)$,
where $ \mathcal{T} $ is an (unconditioned) $\mu$-Bienaymé tree.

\begin{proposition}
\label{prop:height}
Let $(h_{n})$ be a sequence of positive real numbers such that $h_{n} \rightarrow \infty$. The following assertions hold.
\begin{enumerate}
\item[(i)] If $n Q_{ h_{n}} \rightarrow \infty$ then $\P(\mathsf{H}(\mathcal{T}_n) \geq  h_{n}) \rightarrow 1$ as $n \rightarrow \infty$.
\item[(ii)] If $n Q_{ h_{n}} \rightarrow 0$ then $\P(\mathsf{H}(\mathcal{T}_n) < h_{n}) \rightarrow 1$ as $n \rightarrow \infty$.
\end{enumerate}
\end{proposition}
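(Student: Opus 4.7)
The plan is to reduce $\mathsf{H}(\mathcal{T}_n)$ to the maximum height among the $\Delta(\mathcal{T}_n)$ subtrees grafted at the condensation vertex $u_\star$, use Proposition \ref{prop:forests} to treat these subtrees as i.i.d.\ $\mu$-Bienaymé trees, and then carry out a classical extreme-value analysis based on the tail $Q_n$. The recurring ingredients will be: (a) the tightness of $\mathsf{H}_\Delta(\mathcal{T}_n)$ and the estimates $\Delta(\mathcal{T}_n)\sim(1-m)n$, $\Delta^2(\mathcal{T}_n)=o_{\P}(n)$ from Theorem \ref{thm:main}; (b) the TV-approximation of Proposition \ref{prop:forests}; and (c) a comparison $Q_{n-C}\le K_C\,Q_n$ for fixed $C$, which follows from the general subcritical-branching fact that $Q_{n+1}/Q_n\rightarrow m$ (a consequence of iterating $Q_{n+1}=1-F_\mu(1-Q_n)$ and the convexity of $F_\mu$ with $F_\mu'(1)=m$).

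For part (i), fix $\varepsilon>0$ and choose $C$ and $\delta\in(0,1-m)$ so that $\P(\mathsf{H}_\Delta(\mathcal{T}_n)\le C,\ \Delta(\mathcal{T}_n)\ge\lfloor\delta n\rfloor)\ge 1-\varepsilon$ for all $n$ large. On this event, $\mathsf{H}(\mathcal{T}_n)\ge \mathsf{H}_\Delta(\mathcal{T}_n)+1+\max_{1\le i\le\lfloor\delta n\rfloor}H(\tau_i)$, and Proposition \ref{prop:forests} lets us replace the $\tau_i$'s by i.i.d.\ $\mu$-Bienaymé trees at TV-cost $o(1)$, yielding
\[
\P(\mathsf{H}(\mathcal{T}_n)\ge h_n)\ \ge\ 1-\bigl(1-Q_{h_n-C-1}\bigr)^{\lfloor\delta n\rfloor}-\varepsilon-o(1)\ \ge\ 1-\exp\!\bigl(-\lfloor\delta n\rfloor\,Q_{h_n}\bigr)-\varepsilon-o(1)
\]
by monotonicity $Q_{h_n-C-1}\ge Q_{h_n}$. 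Since $nQ_{h_n}\rightarrow\infty$ the right-hand side tends to $1-\varepsilon$, and $\varepsilon$ is arbitrary.

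For part (ii), on $\{\mathsf{H}_\Delta(\mathcal{T}_n)\le C\}$ the event $\{\mathsf{H}(\mathcal{T}_n)\ge h_n\}$ forces either some $\tau_i$ at $u_\star$ to have height $\ge h_n-C-1$, or some subtree hanging off a non-$u_\star$ vertex of the path from the root to $u_\star$ (at depth $d\le C$) to have height $\ge h_n-d$. The count of such spine-side subtrees is at most $\mathsf{H}_\Delta(\mathcal{T}_n)\cdot\Delta^2(\mathcal{T}_n)=o_{\P}(n)$, and an argument analogous to Proposition \ref{prop:forests} renders each close in TV to an unconditioned $\mu$-Bienaymé tree, so their contribution is $o_{\P}(n)\cdot Q_{h_n-C}=o(nQ_{h_n})\rightarrow 0$. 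For the subtrees at $u_\star$, Proposition \ref{prop:forests} with $\delta$ close to $1-m$ controls the first $\lfloor\delta n\rfloor$; the remaining $\Delta(\mathcal{T}_n)-\lfloor\delta n\rfloor\le 2((1-m)-\delta)n+o_{\P}(n)$ subtrees must be handled either by a symmetric version of Proposition \ref{prop:forests} for the \emph{last} children of $u_\star$ (legitimate because the one-big-jump decomposition is symmetric in the remaining indices) or by a cruder union bound exploiting their small number. A union bound then gives $\P(\mathsf{H}(\mathcal{T}_n)\ge h_n)\le (1+o(1))\,nQ_{h_n-C-1}+o(1)$, and ingredient (c) combined with $nQ_{h_n}\rightarrow 0$ concludes.

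The main obstacle will be precisely this extension of the i.i.d.~approximation to the whole forest at $u_\star$, since Proposition \ref{prop:forests} only directly covers the first $\lfloor\delta n\rfloor$ children for $\delta<1-m$. This is also the step where the $\alpha=1$ regime demands more care than \cite{Kortchemski_2013} for $\alpha>1$, where the explicit asymptotic $Q_n\sim c\,m^n$ from \cite{HSV67} is available and simplifies this control; here, one must work instead with the ratio estimate in (c), which holds more generally.
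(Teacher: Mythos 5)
Your proposal follows essentially the same route as the paper: reduce to the forest of subtrees grafted at the condensation vertex $u_\star$, invoke Proposition~\ref{prop:forests} to replace the first $\lfloor\delta n\rfloor$ of them by i.i.d.\ $\mu$-Bienaym\'e trees, and then run a standard extreme-value computation with the tail $Q_n$. Your part~(i) is the paper's part~(i), up to an inessential re-parametrization (your extra $C$ in $Q_{h_n-C-1}$ is harmless since you only use $Q_{h_n-C-1}\ge Q_{h_n}$, and in fact $\mathsf H(\mathcal T_n)\ge 1+\max_i H(\tau_i)$ already without conditioning on $\mathsf H_\Delta\le C$).

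For part~(ii), you correctly identify the two difficulties: (a) Proposition~\ref{prop:forests} only covers the first $\lfloor\delta n\rfloor$ children with $\delta<1-m$, not all $\Delta(\mathcal T_n)\approx(1-m)n$ of them, and (b) $\mathsf H(\mathcal T_n)$ could in principle be achieved outside the forest at $u_\star$. For (a), your ``symmetric version of Proposition~\ref{prop:forests}'' is exactly what the paper does, but the paper makes it cleaner by splitting the forest exactly in half at $\lfloor\Delta_n/2\rfloor$, observing that $[\mathcal T_n]_{\lceil\Delta_n/2\rceil,\Delta_n}$ has the same law as $[\mathcal T_n]_{1,\Delta_n-\lfloor\Delta_n/2\rfloor}$ by exchangeability, and then noting $\lfloor\Delta_n/2\rfloor\le\lfloor\delta n\rfloor$ w.h.p.\ for $\delta=2(1-m)/3<1-m$; your alternative ``cruder union bound on the remaining $\Delta_n-\lfloor\delta n\rfloor$ subtrees'' would \emph{not} work directly, since that leftover count is of order $n$, not $o(n)$, so you should commit to the symmetry route. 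For (b), the paper's written argument simply declares, citing Theorem~\ref{thm:main}(ii), that one may replace $\mathsf H$ by $\mathsf H_\star$ (the height of the forest at $u_\star$), which hides the same issue you are attempting to address; the justification resides in \cite{Kortchemski_2013}, where the ``cut'' tree (with the $\Delta_n$ subtrees at $u_\star$ pruned) is shown to be stochastically bounded, whence its height is tight and the reduction is immediate. Your substitute --- counting the spine-side subtrees as $\mathsf H_\Delta\cdot\Delta^2=o_{\mathbb P}(n)$ and invoking ``an argument analogous to Proposition~\ref{prop:forests}'' to TV-approximate each of them --- is directionally correct but is asserted rather than proved; Proposition~\ref{prop:forests} as stated does not give TV control of the spine-side subtrees jointly with the forest at $u_\star$, and this is the one genuine gap in your write-up that would need to be filled (by adapting the corresponding lemma from \cite{Kortchemski_2013}, exactly as the paper implicitly does). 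Finally, your ingredient~(c), the ratio estimate $Q_{n+1}/Q_n\to m$, is indeed correct and is in fact the content of Lemma~\ref{lem:bounds}(i); you do not need to re-derive it.
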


\begin{proof}
We mimic the proof of Theorem 4 in \cite {Kortchemski_2013}. For every tree $\tau$, denote by $\mathsf{H}_{\star}(\tau)$ the height of the forest $ [\tau]_{1,\Delta(\tau)}$. To simplify notation, we set $\Delta_{n}=\Delta(\mathcal{T}_{n})$. By Theorem \ref{thm:main} (ii), it is enough to show (i) and (ii) with $\mathsf{H}(\mathcal{T}_n) $ replaced by $\mathsf{H}_{\star}(\mathcal{T}_n) $. 

We start with (i). Observe that by Theorem \ref{thm:main} (i), setting $\delta=(1-m)/2$, we have $ \P(\Delta_{n} \geq  \lfloor \delta n \rfloor) \rightarrow 1$ as $n \rightarrow \infty$. Thus, using Proposition \ref{prop:forests},
$$\P(\mathsf{H}_{\star}(\mathcal{T}_n) <  h_{n}) \leq  \P(\mathsf{H}( [\mathcal{T}_{n}]_{1, \lfloor \delta n \rfloor}) < h_{n})+o(1) =   \P(\mathsf{H}( \mathcal{F}^{ \lfloor \delta n\rfloor}) < h_{n})+o(1).$$
But $\P(\mathsf{H}( \mathcal{F}^{ \lfloor \delta n\rfloor}) < h_{n})=(1-Q_{h_{n}})^{\lfloor \delta n \rfloor} \rightarrow 0$
since $n Q_{ h_{n}} \rightarrow \infty$.

For (ii), write
$$\P(\mathsf{H}_{\star}(\mathcal{T}_n) \geq h_{n}) \leq \P(\mathsf{H}_{\star}([\mathcal{T}_n]_{1,\lfloor \Delta_{n}/2\rfloor})\geq h_{n})+\P(\mathsf{H}_{\star}([\mathcal{T}_n]_{\lceil \Delta_{n}/2\rceil,\Delta_{n}})\geq h_{n}).$$
Since $[\mathcal{T}_n]_{\lceil \Delta_{n}/2\rceil,\Delta_{n}}$ and $[\mathcal{T}_n]_{1,\Delta_{n}-\lfloor \Delta_{n}/2\rfloor}$ have the same distribution, it suffices to show that $ \P(\mathsf{H}_{\star}([\mathcal{T}_n]_{1,\lfloor \Delta_{n}/2\rfloor})\geq h_{n}) \rightarrow 0$ as $n \rightarrow \infty$. Set $\delta=2(1-m)/3$. By Theorem \ref{thm:main} (i), we have $ \P(\lfloor \Delta_{n}/2\rfloor \leq \lfloor \delta n \rfloor) \rightarrow 1$ as $n \rightarrow \infty$.  Thus, using Proposition \ref{prop:forests},
$$ \P(\mathsf{H}_{\star}([\mathcal{T}_n]_{1,\lfloor \Delta_{n}/2\rfloor})\geq h_{n}) \leq  \P(\mathsf{H}( [\mathcal{T}_{n}]_{1, \lfloor \delta n \rfloor} \geq  h_{n})+o(1) = \P(\mathsf{H}( \mathcal{F}^{ \lfloor \delta n\rfloor}) \geq h_{n})+o(1).$$
But $ \P(\mathsf{H}( \mathcal{F}^{ \lfloor \delta n\rfloor}) \geq h_{n})=1-\P(\mathsf{H}( \mathcal{F}^{ \lfloor \delta n\rfloor}) <  h_{n})=1-(1-Q_{h_{n}})^{\lfloor \delta n \rfloor} \rightarrow 0$
since $n Q_{ h_{n}} \rightarrow 0$.
This completes the proof.
\end{proof}

In order to apply Proposition \ref{prop:height} we will use the following bounds on $Q_{n}$.

\begin{lemma}
\label{lem:bounds} The following assertions hold.
\begin{enumerate}
\item[(i)] There is a   function $\ell : (0,1] \rightarrow \R_{+}^{*}$ slowly varying at $0$ such that $\ell(x) \rightarrow 0$ as $x \rightarrow 0$ and $Q_{n+1} = Q_n\big(m-\ell(Q_n)\big)$ for every  $n \geq 0$.
\item[(ii)] For every $\eta \in (0,m)$, for every $n$ sufficiently large  we have $(m-\eta)^{n} \leq Q_{n} \leq m^{n}.$
\end{enumerate}
\end{lemma}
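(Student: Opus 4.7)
The starting point is the branching identity: conditioning on the root degree of $\mathcal{T}$, whose subtrees are i.i.d. $\mu$-Bienaymé trees, gives $1-Q_{n+1} = F_\mu(1-Q_n)$ with $F_\mu(s)\coloneqq\sum_k \mu_k s^k$. Defining
$$\ell(x) \coloneqq m - \frac{1-F_\mu(1-x)}{x} = \sum_{k\geq 1}\mu_k\left(k-\frac{1-(1-x)^k}{x}\right),\qquad x\in(0,1],$$
then directly yields the recurrence $Q_{n+1}=Q_n(m-\ell(Q_n))$, so the content of (i) is to identify the asymptotic behavior of $\ell$ near $0$.

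The key step is the asymptotic $\ell(x) \sim \ell^*(1/x)$ as $x\rightarrow 0^+$. Using the geometric-sum identity $(1-(1-x)^k)/x=\sum_{j=0}^{k-1}(1-x)^j$ and Fubini, one rewrites
$$\ell(x) = \sum_{j\geq 1}\bigl(1-(1-x)^j\bigr)\,\overline F(j+1),\qquad \overline F(n)\coloneqq\sum_{k\geq n}\mu_k \sim L(n)/n,$$
where the tail asymptotic comes from Karamata applied to \eqref{eq:H_loc_mu}. Each term is non-negative, so $\ell\geq 0$. I would then split at $j=A/x$ with $A>0$ a large parameter sent to infinity after $x\rightarrow 0$. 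The tail contribution is sandwiched as
$$\bigl(1-e^{-A}+o(1)\bigr)\,R(\lfloor A/x\rfloor+1)\;\leq\;\sum_{j>A/x}\bigl(1-(1-x)^j\bigr)\overline F(j+1)\;\leq\;R(\lfloor A/x\rfloor+1),$$
where $R(i)\coloneqq\sum_{j\geq i}\overline F(j+1)\sim \ell^*(i)$ by a second Karamata computation, and slow variation of $\ell^*$ promotes $\ell^*(A/x)$ to $\ell^*(1/x)$. The head $j\leq A/x$ is bounded using $1-(1-x)^j\leq xj$ by $x\sum_{j\leq A/x}j\,\overline F(j+1)=O(L(1/x))$, which is $o(\ell^*(1/x))$ by Lemma~\ref{lem:bnan}(i). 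Sending $A\rightarrow\infty$ after $x\rightarrow 0$ gives $\ell(x)\sim \ell^*(1/x)$; the latter is slowly varying at $0$ and tends to $0$ since $\ell^*$ is slowly varying at infinity and vanishes at infinity.

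For (ii), non-negativity of $\ell$ gives $Q_{n+1}\leq m Q_n$, so the upper bound $Q_n\leq m^n$ follows by induction from $Q_0=1$. For the lower bound, fix $\eta\in(0,m)$ and choose $\eta'\in(0,\eta)$. Since $Q_n\rightarrow 0$ (by the upper bound) and $\ell(x)\rightarrow 0$ as $x\rightarrow 0$, there exists $N_0$ with $\ell(Q_n)\leq \eta'$ for all $n\geq N_0$; iterating $Q_{n+1}\geq (m-\eta')Q_n$ yields $Q_n\geq c(m-\eta')^n$ for some $c>0$. Because $(m-\eta')/(m-\eta)>1$, this quantity exceeds $(m-\eta)^n$ for $n$ sufficiently large.

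The main obstacle will be the sharp asymptotic $\ell(x)\sim \ell^*(1/x)$: the two Karamata applications (for $\overline F$ and for $R$) and the $A/x$-splitting must cooperate carefully, in particular to transfer $\ell^*(A/x)$ to $\ell^*(1/x)$ via slow variation and to exploit $L=o(\ell^*)$ from Lemma~\ref{lem:bnan}(i) in order to discard the head contribution.
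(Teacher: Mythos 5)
Your proof is correct and follows the same overall strategy as the paper: derive $Q_{n+1}=1-G_\mu(1-Q_n)$ from the branching property, package this as $Q_{n+1}=Q_n(m-\ell(Q_n))$, establish $\ell(x)\sim\ell^*(1/x)$ as $x\to 0$, and iterate for part (ii). The one genuine difference is in the asymptotic step. The paper invokes Karamata's Abelian theorem \cite[Theorem 8.1.6]{Bingham_Goldie_Teugels_1987} to get the expansion $G_\mu(s)=1-m(1-s)+(1-s)\ell(1-s)$ with $\ell(x)\sim\ell^*(1/x)$ directly, and uses the explicit representation $\ell(s)=m-\sum_{k\geq0}\mu([k+1,\infty))(1-s)^k$ only to deduce $0<\ell<m$. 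You instead prove the Abelian asymptotic by hand: you rewrite $\ell(x)=\sum_{j\geq1}(1-(1-x)^j)\,\overline F(j+1)$, split the sum at $j=A/x$, show the tail is sandwiched between $(1-e^{-A}+o(1))\ell^*(1/x)$ and $(1+o(1))\ell^*(1/x)$ (using a second Karamata estimate $R(i)\sim\ell^*(i)$ and slow variation), and show the head is $O(L(1/x))=o(\ell^*(1/x))$ by $L=o(\ell^*)$ from Lemma~\ref{lem:bnan}(i). This is a self-contained unpacking of the black-box the paper cites, at the cost of some length; the sandwiching with a free parameter $A$ is exactly the right mechanism and your use of $L=o(\ell^*)$ is the crucial point the paper's reference hides. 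Two minor points you could tighten: the lemma asks for $\ell:(0,1]\to\R_+^*$, i.e.\ strict positivity, which indeed holds in your representation because $\overline F(j+1)>0$ for every $j$ (since $\mu_n\sim L(n)/n^2>0$ for large $n$) and $1-(1-x)^j>0$ for $j\geq1$ and $x\in(0,1]$, but you only state $\ell\geq0$; and the conclusion ``slowly varying at $0$'' follows from $\ell(x)\sim\ell^*(1/x)$ only because $\ell$ is an explicitly defined (hence measurable) function, a point worth a word.
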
 
\begin{proof}
 Let $G_\mu (t) = \sum_{k\geq 0}t^k \mu_k$ be the probability generating function of $\mu$. Since $\mu$ satisfies \eqref{eq:H_loc_mu}, we may apply Karamata's Abelian theorem \cite[Theorem 8.1.6]{Bingham_Goldie_Teugels_1987} (in the reference we take $n= \alpha = 1, \beta = 0$, $f_1(s) = G_\mu(e^{-s})-1+ms$, substitute $s$ by $-\log(s)$ and Taylor expand the logarithm) to write for $s \in (0,1]$
            \begin{equation}\label{eq: pgf}
                G_\mu(s) = 1- m(1-s) + (1-s) \ell(1-s)
            \end{equation}
            for a  function $\ell : (0,1] \rightarrow \R$ with $\ell(x) \sim \ell^\star(1/x)$ as $x \rightarrow 0$. We claim that for $s \in (0,1]$ we have
\begin{equation}\label{eq: l(s)}
                \ell(s) = m-\sum_{k\geq 0} \mu([k+1,\infty))(1-s)^k.
            \end{equation}
            This follows from \eqref{eq: pgf} by  writing $\ell(1-s)$ as
            \begin{eqnarray*}
                m + \frac{G_\mu(s)-1}{1-s} &=& m + \frac{1}{1-s} \sum_{k\geq 0} \mu_k (s^k-1) 
                  = m- \frac{1}{1-s} \sum_{k\geq 1}\mu_k (1-s)(s^{k-1}+ \cdots +1) \\
                & =& m-\sum_{k\geq 1} \sum_{j=0}^{k-1} \mu_j s^k=  m-\sum_{k\geq 0} \mu([k+1,\infty))s^k.
            \end{eqnarray*}
            From \eqref{eq: l(s)} it follows that $\ell(0):= \lim_{s \downarrow 0} = 0$, $\ell(1) = m-1+\mu_0 <m$ and $\ell(s)$ is increasing. Thus, $0<\ell(s)<m$ for every $s \in (0,1]$.
           
            Now,  by decomposing $ \mathcal{T}$ into the forest of subtrees rooted at children of the root we get
            \begin{equation*}
                Q_{n+1} = \sum_{k \geq 1} \mu_k \Big(1-\P(\mathsf{H}(\mathcal{T})< n)^k\Big) = \sum_{k\geq 1}\mu_k\Big(1-(1-Q_n)^k\Big) = 1-G_\mu(1-Q_n).
            \end{equation*}
            By substituting the expression of $G_{\mu}$ given by \eqref{eq: pgf}, we get (i).
            
            We turn to (ii). The upper bound simply comes from the fact that $\ell$ is positive, implying that  $Q_{n+1}  \leq  mQ_n$ for every $n \geq 0$. For the lower bound, fix $\eta \in (0,m)$. Since $\ell(x) \rightarrow 0$ as $ x \rightarrow 0$, we may choose $n_{0}$ such that $\ell(Q_{n}) \leq \eta/2$ for $n \geq n_{0}$. Then
$$
Q_{n+1} = Q_{n_0} \prod_{k=n_0}^n (m-\ell(Q_k)) \geq Q_{n_{0}} \left( m-\eta/2 \right)^{n-n_{0}+1}$$
which is at least $(m-\eta)^{n+1}$ for $n$ sufficiently large. This completes the proof.
            \end{proof}

We are now ready to finish the proof of Theorem \ref{thm:main}.

\begin{proof}[Proof of Theorem \ref{thm:main} (iii)]
We start with (a). Fix $\varepsilon \in (0,1)$. Take $h_{n}=(1+\varepsilon) \frac{\log(n)}{\log(1/m)}$. By Lemma \ref{lem:bounds} (ii), $n Q_{ h_{n}} \leq  {n^{-\varepsilon}}  \rightarrow 0$.
Now take $h_{n}=(1-\varepsilon) \frac{\log(n)}{\log(1/m)}$.  Fix  $\eta \in (0,m)$ such that
$$1-(1-\varepsilon) \frac{\log(m-\eta)}{\log m} \geq  \frac{\varepsilon}{2}.$$
Then by   Lemma \ref{lem:bounds} (ii), for $n$ sufficiently large we have $n Q_{ h_{n}} \geq n^{\varepsilon/2} \rightarrow \infty$. By applying Proposition \ref{prop:height}, the claim of (a) follows.

Now we turn to (b). Let $u_{n}>0$ be such that $Q_{n}= u_{n} m^{n }$.  By  \cite[Theorem 2]{HSV67}, $u_{n}$ converges to a positive constant  if $\sum_{n \geq 1} (n \log n) \mu_{n} <\infty$, and converges to $0$ otherwise.

If $\sum_{n \geq 1} (n \log n) \mu_{n} <\infty$, it follows that  $Q_{n} \sim c m^{n }$ as $n \rightarrow \infty$ for some $c>0$, which by Proposition \ref{prop:height} implies that for every sequence $\lambda_{n} \rightarrow \infty$ we have
$$ \P\left( \left|{\mathsf{H}(\mathcal{T}_n)}- \frac{\log(n)}{\log(1/m)} \right| \geq \lambda_{n}\right) \to 0,$$
implying that  the sequence $ \left({\mathsf{H}(\mathcal{T}_n)}- {\log(n)}/{\log(1/m)} \right)_{n \geq 1}$ is tight.

Now assume that  $\sum_{n \geq 1} (n \log n) \mu_{n} =\infty$, so that $u_{n} \rightarrow 0$. We build a sequence $r_{n} \rightarrow \infty$ such that
\begin{equation}
\label{eq:rn}
\P\left( {\mathsf{H}(\mathcal{T}_n)} \leq  \frac{\log(n)}{\log(1/m)} -r_{n}\right) \to 1,
\end{equation}
which will imply that the sequence $ \left({\mathsf{H}(\mathcal{T}_n)}- {\log(n)}/{\log(1/m)} \right)_{n \geq 1}$ is not tight. To this end, let $(a_{p})_{p \geq 1}$ be an increasing sequence of integers such that for every $p \geq 1$, for every $n \geq a_p$ we have $c_{n} \leq 1/p$. Then, let $(p_{n})_{n \geq 1}$ be the weakly increasing sequence of integers such that for every $n \geq 1$ we have
$$a_{p_{n}} \leq \frac{1}{2} \frac{\log(n)}{\log(1/m)} < a_{p_{n}+1}.$$
Observe that $p_{n} \rightarrow \infty$. Then set
$$r_{n}= \frac{1}{2 \log(1/m)} \min \left(  \log(n), \log({p_{n}}) \right) \quad \textrm{and} \quad  h_{n}=\frac{\log(n)}{\log(1/m)} -{r_{n}} .$$
Observe that $r_{n} \rightarrow \infty$ and that $h_{n} \geq  \frac{1}{2} \frac{\log(n)}{\log(1/m)} \geq a_{p_{n}}$. Thus $ u_{h_{n}} \leq 1/p_{n}$. As a consequence,
$$n Q_{ h_{n}} \leq n u_{h_n} m^{h_n} \leq n \cdot \frac{1}{p_{n}}  \cdot e^{-\log(n)+\log(1/m) r_n}    \leq  n \cdot \frac{1}{p_{n}}  \cdot \frac{\sqrt{p_n}}{n}  = \frac{1}{\sqrt{p_{n}}} \to 0.$$
The convergence \eqref{eq:rn}  then follows from Proposition \ref{prop:height} (ii) and this completes the proof.
\end{proof}

When $\sum_{n \geq 1}  (n \log n) \mu_{n}  =\infty$, in some particular cases it is possible to find the second order term in the magnitude of $\mathsf{H}(\mathcal{T}_{n})$ by analysing the asymptotic behavior of $u_n=Q_n/m^n$ as $n\rightarrow\infty$, as seen in the following result.

\begin{proposition}
\label{prop:ex}
Assume that $L(n) \sim c/\log(n)^{1+\beta}$ with $\beta \in (0,1]$.
\begin{enumerate}
\item[(i)] For $\beta=1$, for every $\varepsilon>0$ we have
$$ \P\left( \left| {\mathsf{H}(\mathcal{T}_n)}-  \frac{\log(n)}{\log(1/m)} + \frac{c}{ m \log(1/m)^2} \log \log n \right| \geq \varepsilon \log \log n\right) \to 0.$$
\item[(ii)] For $\beta \in (0,1)$, for every $\varepsilon>0$  we have
$$ \P\left( \left| {\mathsf{H}(\mathcal{T}_n)}-  \frac{\log(n)}{\log(1/m)} + \frac{c}{(1-\beta)\beta m \log(1/m)^{1+\beta}} \log(n)^{1-\beta} \right| \geq \varepsilon \log(n)^{1-\beta} \right) \to 0.$$
\end{enumerate}
\end{proposition}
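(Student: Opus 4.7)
The plan is to apply Proposition \ref{prop:height}, which reduces the statement to locating the threshold $h_n$ at which $nQ_{h_n}$ transitions from $+\infty$ to $0$. All the work therefore goes into upgrading the crude bounds of Lemma \ref{lem:bounds}(ii) into a precise asymptotic equivalent for $u_n := Q_n/m^n$, sharp enough to match the $\varepsilon \log\log n$ (resp.\ $\varepsilon(\log n)^{1-\beta}$) window in the statement.

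I would exploit the recursion $u_{n+1}=u_n(1-\ell(Q_n)/m)$ coming from Lemma \ref{lem:bounds}(i). Taking logarithms and summing gives $\log u_n \sim -m^{-1}\sum_{k=0}^{n-1}\ell(Q_k)$. Since Lemma \ref{lem:bounds}(ii) already forces $|\log u_k|=o(k)$, we have $\log(1/Q_k)\sim k\log(1/m)$, and combined with $\ell(x)\sim \ell^*(1/x)$ as $x\to 0^+$ (obtained inside the proof of Lemma \ref{lem:bounds}(i)) together with the slow variation of $\ell^*$, this yields $\ell(Q_k)\sim \ell^*(m^{-k})$. Plugging in the asymptotic $\ell^*(n)\sim c/(\beta\log(n)^\beta)$ from Example \ref{ex:anl}(i) then gives $\ell(Q_k)\sim c/(\beta k^\beta \log(1/m)^\beta)$. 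Summing separates the two regimes: for $\beta=1$ one finds $\log u_n\sim -c\log(n)/(m\log(1/m))$, whereas for $\beta\in(0,1)$ one finds $\log u_n\sim -c\, n^{1-\beta}/(m\beta(1-\beta)\log(1/m)^\beta)$.

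To conclude I would write $h_n = \log(n)/\log(1/m) - r_n$ and expand $\log(nQ_{h_n}) = r_n\log(1/m) + \log u_{h_n}$, then choose $r_n$ so that this quantity crosses $0$. Since $h_n\sim\log(n)/\log(1/m)$ to leading order, we have $\log h_n \sim \log\log n$ when $\beta=1$ and $h_n^{1-\beta}\sim (\log n)^{1-\beta}/\log(1/m)^{1-\beta}$ when $\beta\in(0,1)$. Solving the resulting balance equation pins down the critical $r_n$ announced in the proposition; Proposition \ref{prop:height} then shows that taking $h_n$ slightly above (resp.\ below) this critical value forces $\mathsf{H}(\mathcal{T}_n)<h_n$ (resp.\ $\geq h_n$), producing the desired concentration.

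The main obstacle is quantitative: the asymptotic for $\log u_n$ must be sharp enough that the error terms are genuinely smaller than the target correction $r_n$. Two sources of error need to be controlled, namely the Taylor remainder $\log(1-\ell(Q_n)/m)+\ell(Q_n)/m=O(\ell(Q_n)^2)$ summed over $k$ (which is summable, or at worst $o(n^{1-\beta})$, across all relevant values of $\beta$), and the uniform approximation $\ell(Q_k)/\ell^*(m^{-k})\to 1$. Both should be handled through the representation theorem for slowly varying functions \cite[Theorem 1.3.1]{Bingham_Goldie_Teugels_1987}, exactly in the spirit of the uniform slow-variation estimate already used at the end of the proof of Theorem \ref{th:asy iid}.
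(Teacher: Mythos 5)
Your proposal follows essentially the same path as the paper's proof: start from the recursion $u_{n+1}=u_n(1-\ell(Q_n)/m)$ of Lemma \ref{lem:bounds}(i), use $\ell(x)\sim\ell^*(1/x)$ together with the explicit asymptotic for $\ell^*$ from Example \ref{ex:anl}(i), show $\log(1/u_n)=o(n)$ via Lemma \ref{lem:bounds}(ii) to simplify the denominator, sum the resulting approximation to obtain $\log u_n\sim -\tfrac{c}{m\log(1/m)}\log n$ (for $\beta=1$) or $\log u_n\sim -\tfrac{c}{(1-\beta)\beta m\log(1/m)^{\beta}}n^{1-\beta}$ (for $\beta\in(0,1)$), and then plug $h_n=\log(n)/\log(1/m)-r_n\pm\varepsilon(\cdot)$ into Proposition \ref{prop:height}. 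This is precisely the argument in the paper, up to presentation (the paper phrases the summation as ``the recurrence relation readily implies'' rather than writing out $\log u_n\sim -m^{-1}\sum\ell(Q_k)$), so the proposal is correct and matches the paper's approach.
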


\begin{proof}
Assume that $L(n) \sim c/\log(n)^{1+\beta}$ with $\beta \in (0,1]$. By Example \ref{ex:anl} (i), and using the fact $\ell(x)\sim\ell^\star(1/x)$ as $x \rightarrow 0$  (this was seen in the proof of Lemma \ref{lem:bounds}), the recurrence relation  $Q_{n+1}=Q_n(m-\ell(Q_n))$ can be rewritten as
\begin{equation}
    \label{eq:recrel1}
u_{n+1}=u_n \left(1- \frac{c/(\beta m)}{\left(\log(1/u_n)+n\log(1/m)\right)^\beta}(1+\varepsilon_n)\right)
\end{equation}
where $\varepsilon_n$ is a sequence going to $0$. Now we claim that $\log(1/u_n)=o(n)$. Indeed, by Lemma \ref{lem:bounds} for every $\eta \in (0,1)$ we have $\limsup ( \log(1/u_n)/n) \leq \log (m/(m-\eta))$ and by taking $\eta \rightarrow 0$ we get our claim. Thus the recurrence relation \eqref{eq:recrel1} can be rewritten as
\begin{equation}
    \label{eq:recrel2}
u_{n+1}=u_n \left(1- \frac{c/(\beta m)}{\left(n\log(1/m)\right)^\beta}(1+\varepsilon'_n)\right)
\end{equation}
where $\varepsilon'_n$ is a sequence going to $0$. 

Now assume that $\beta=1$. The recurrence relation \eqref{eq:recrel2} readily implies that  $\log(u_n) \sim  - \frac{c}{ m \log(1/m)} \log (n)$. Set
$$h^{\pm}_n= \frac{\log(n)}{\log(1/m)} - \frac{c}{ m \log(1/m)^2} \log \log n \pm  \varepsilon \log \log n$$
and observe that $\log(u_{h^\pm_n}) \sim - \frac{c}{ m \log(1/m)} \log \log n$ as $n\rightarrow \infty$. As a consequence,
$$\log \left(n Q_{ h^\pm_{n}}\right) =\log(u_{h^\pm_n}) +  \frac{c}{ m \log(1/m)} \log \log n -(\pm \varepsilon \log(1/m) \log \log n),$$
which is asymptotic to $\mp \varepsilon \log(1/m) \log \log n$ as $n \rightarrow \infty$. 
The conclusion then follows from Proposition \ref{prop:height}.

When $\beta \in (0,1)$, the recurrence relation \eqref{eq:recrel2} now implies that  we have $\log(u_n) \sim  - \frac{c}{(1-\beta)\beta m \log(1/m)^\beta} n^{1-\beta}$ and the desired result follows as in the case $\beta=1$.
\end{proof}

\providecommand{\bysame}{\leavevmode\hbox to3em{\hrulefill}\thinspace}
\providecommand{\MR}{\relax\ifhmode\unskip\space\fi MR }
\providecommand{\MRhref}[2]{%
  \href{http://www.ams.org/mathscinet-getitem?mr=#1}{#2}
}
\providecommand{\href}[2]{#2}

\end{document}